\title[General Maximal Restriction]{Low-dimensional maximal restriction principles for the Fourier transform}
\author{Jo\~ao P. G. Ramos}
\keywords{Maximal functions, Fourier restriction, Fourier transform}
\newcommand{\R}{\mathbb{R}}
\newcommand{\Z}{\mathbb{Z}}
\newcommand{\C}{\mathbb{C}}
\newcommand{\mmd}{\mathrm{d}}
\newtheorem{theorem}{Theorem}
\newtheorem{lemma}{Lemma}
\newtheorem{prop}{Proposition}
\newtheorem{quest}{Question}
\def\Xint#1{\mathchoice
{\XXint\displaystyle\textstyle{#1}}%
{\XXint\textstyle\scriptstyle{#1}}%
{\XXint\scriptstyle\scriptscriptstyle{#1}}%
{\XXint\scriptscriptstyle\scriptscriptstyle{#1}}%
\!\int}
\def\XXint#1#2#3{{\setbox0=\hbox{$#1{#2#3}{\int}$}
\vcenter{\hbox{$#2#3$}}\kern-.5\wd0}}
\def\dashint{\Xint-}
\begin{document}
\subjclass[2010]{42B10, 42B25, 42B37}
\begin{abstract}
Following the ideas in \cite{Ramos1}, we prove abstract maximal results for the Fourier transform. Our results deal mainly with maximal operators of convolution-type and $r-$average maximal functions. 
As a by-product of our techniques we obtain spherical maximal restriction estimates, as well as restriction estimates for $2-$average maximal functions, answering thus points left open by V. Kova\v{c} \cite{Kovac1} and 
M\"uller, Ricci and Wright \cite{MRW}.
\end{abstract}
\maketitle

\section{Introduction} 

The classical \emph{restriction problem} for the Fourier transforms asks for the largest possible range of exponents $1 \le p,q \le + \infty$ so that
an inequality of the form 
\begin{equation}\label{restriction1}
\| \widehat{f}|_{S}\|_{L^q(S)} \le C_{p,q,d,S} \|f\|_{L^p(\R^d)}
\end{equation}
holds for any function $f \in \mathcal{S}(\R^d).$ Here, $S$ is taken to be a subset of $\R^d$, endowed with a suitable measure. \\

The existence of such \emph{a priori} inequalities allows one to define restrictions of Fourier transforms of $L^p$ functions to 
smaller sets in the $L^q-$sense. Recently, effort has been put into extending this definition to a \emph{pointwise} sense: one has to look instead at 
\[
\| \mathcal{M}(\widehat{f})\|_{L^q(S)} \le C_{p,q,d,S} \|f\|_{L^p(\R^d)},
\]
where $\mathcal{M}$ is a suitable maximal operator. In \cite{MRW}, the authors prove, for the first time, such a statement about restriction to curves. Their techniques adapt the ones in \cite{CarlSjoelin} to the maximal context. The works of Vitturi \cite{Vitturi}, Kova\v{c} and Oliveira e Silva \cite{KovacOliveira} and Ramos \cite{Ramos1}
have subsequently dealt with this problem, extending the maximal restriction property to higher dimensions, considering variational versions of it and sharpening the results in \cite{MRW}. \\

More recently, Kova\v{c} \cite{Kovac1} proved a general, abstract principle for such pointwise statements to hold. One of his results is that, whenever restriction estimates like \eqref{restriction1} hold with $p<q$, and whenever $\mu: \mathcal{B}(\R^d) \to \C$ is a complex measure such that $|\nabla \widehat{\mu}(\xi)| \le D(1+|\xi|)^{-1-\eta},$ for some $\eta >0,$ then 
\begin{equation}\label{kovactheo}
\| \sup_{t > 0} | \widehat{f} * \mu_t (x)| \|_{L^q(S)} \le C_{p,q,S,\eta} \cdot D \|f\|_{L^p(\R^d)}.
\end{equation}
Here, $\mu_t(E) := \mu(t^{-1}E).$ Note that $\mmd \mu = \chi_{B(0,1)}(x) \mmd x$ satisfies the Fourier decay condition above in \emph{any} dimension, which generalizes the results of Vitturi \cite{Vitturi}, M\"uller, Ricci, Wright \cite{MRW} and Kova\v{c} and Oliveira e Silva \cite{KovacOliveira}.\\ 

The purpose of this note is to employ the techniques in \cite{Ramos1} to extend inequality \eqref{kovactheo} in low-dimensional cases not covered by Kova\v{c}'s techniques. Additionally, we simplify the techniques in \cite{Ramos1} in order to extend a result from \cite{MRW}. 

\subsection{Two-dimensional results}\label{twodimm} In \eqref{kovactheo}, the main requirement on the measure $\mu$ that $|\nabla \widehat{\mu}(\xi)| \lesssim_{\eta,\mu} (1+|\xi|)^{-1-\eta},$ for some $\eta >0,$ is 
only satisfied  by the \emph{spherical measure} $\mmd \mu = \mmd \sigma_{\mathbb{S}^{d-1}}$ if $d \ge 4.$ Therefore, Kova\v{c}'s result does not yield bounds for lower-dimensional restrictions of spherical maximal functions of the Fourier transform.
This was our motivation for the first result of this paper. 

\begin{theorem}\label{meastheo1}
Let $\mu$ be a positive, finite Borel measure defined in $\R^2$, and suppose that the maximal function 
\[
M_{\mu} g(x) := \sup_{t > 0} |g|*\mu_t(x).
\]
is bounded from $L^r(\R^2) \to L^r(\R^2),$ whenever $r > 2.$ Then the following bound holds: 
\[ 
\|M_{\mu}(\widehat{f})\|_{L^q(\mathbb{S}^1)} \le C_{p,\mu} \|f\|_{L^p(\R^2)},
\]
where $1 \le p < \frac{4}{3}, p' \ge 3q.$ 
\end{theorem}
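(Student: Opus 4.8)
\noindent\emph{Proof strategy.} The plan is to combine the sharp restriction estimate for $\mathbb{S}^1$ with the $L^r(\R^2)$ boundedness of $M_\mu$ by means of a dyadic decomposition of the dilation parameter. First, reductions. The pair $(p,q)=(1,\infty)$ is admissible, since $M_\mu(\widehat f)(x)\le\mu(\R^2)\|\widehat f\|_{L^\infty}\le\mu(\R^2)\|f\|_{L^1}$, and it lies on the critical line $p'=3q$; since $\mathbb{S}^1$ has finite measure (so $q$ may be lowered freely), interpolation reduces matters to establishing the bound for a single pair with $q=p'/3$ and $1<p<\tfrac43$, in particular $q>1$, so the triangle inequalities used below are legitimate. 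By density we may take $f$ Schwartz, keeping track that the constant depends on $\mu$ only through $\mu(\R^2)$ and through the $L^r\to L^r$ norm of $M_\mu$; splitting $\mu$ into its restrictions to the annuli $\{|y|\sim 2^m\}$ and rescaling, we may also assume $\operatorname{supp}\mu\subseteq B(0,1)$.

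Second, I would record the two analytic inputs. \emph{(i) Thickened restriction.} Applying $\|\widehat g\|_{L^q(\mathbb{S}^1)}\lesssim\|g\|_{L^p}$ to modulations and dilations of $f$ and using the affine invariance of the $L^p$ norm, one gets $\|\widehat f\|_{L^q(v+\rho\,\mathbb{S}^1)}\lesssim\|f\|_{L^p}$ uniformly for $|v|\lesssim1$ and $\rho\sim1$, and hence, integrating in the transversal variable, $\|\widehat f\|_{L^q(N_\delta(\mathbb{S}^1))}\lesssim\delta^{1/q}\|f\|_{L^p}$ for $0<\delta<1$; interpolating with Hausdorff--Young $\|\widehat f\|_{L^{p'}(\R^2)}\le\|f\|_{L^p}$ gives $\|\widehat f\|_{L^s(N_\delta(\mathbb{S}^1))}\lesssim\delta^{\alpha(s)}\|f\|_{L^p}$ for all $q\le s<p'$, with $\alpha(s)>0$. \emph{(ii) Maximal bound.} One fixes $r\in(\max(2,q),p')$, which is possible precisely because $p<\tfrac43$ forces $p'>4>2$ while $q=p'/3<p'$; then $M_\mu$ is bounded on $L^r(\R^2)$ by hypothesis.

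Third, the main decomposition. Write $M_\mu(\widehat f)=\sup_{k\in\Z}M_\mu^{(k)}(\widehat f)$ with $M_\mu^{(k)}(\widehat f)(x):=\sup_{2^k\le t<2^{k+1}}(|\widehat f|*\mu_t)(x)$. For $x\in\mathbb{S}^1$ and $2^k\lesssim1$ the quantity $(|\widehat f|*\mu_t)(x)$ only sees $\widehat f$ on the annulus $N_{C2^k}(\mathbb{S}^1)$ and is a circular-type average at scale $2^k$; thus $M_\mu^{(k)}(\widehat f)$ coincides on $\mathbb{S}^1$ with $M_\mu^{(k)}\big(\widehat f\,\chi_{N_{C2^k}(\mathbb{S}^1)}\big)$, a function supported in a thin band $N_{C'2^k}(\mathbb{S}^1)$ which is essentially \emph{flat in the radial direction at scale $2^k$}. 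That flatness is what transfers the problem from $\R^2$ back to $\mathbb{S}^1$: the value on $\mathbb{S}^1$ is comparable to a $2^k$-radial average, so $\|M_\mu^{(k)}(\widehat f)\|_{L^q(\mathbb{S}^1)}\lesssim 2^{-k/q}\,\|M_\mu^{(k)}(\widehat f\,\chi_{N_{C2^k}(\mathbb{S}^1)})\|_{L^q(N_{C'2^k}(\mathbb{S}^1))}$, and then H\"older on the thin band together with inputs (i)--(ii) estimates the right-hand side. For $2^k\gtrsim1$ the scheme is run with the role of smallness reversed: $M_\mu^{(k)}(\widehat f)$ depends only on $\widehat f$ on the dyadic annulus $A_k=\{|\xi|\sim2^k\}$, the $L^{p'}$-masses $\|\widehat f\,\chi_{A_k}\|_{L^{p'}}$ are square-summable (indeed $\ell^{p'}$-summable) by Hausdorff--Young, and this furnishes decay in $k$ after the same localization, application of (ii), and radial transfer at unit scale.

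The main obstacle is the summation over $k$. The crude combination of (i), (ii) and the radial transfer is scale invariant at the critical exponents and so does not, by itself, produce the decay $2^{-\varepsilon|k|}$ needed for $\sum_k\|M_\mu^{(k)}(\widehat f)\|_{L^q(\mathbb{S}^1)}^q<\infty$; this is where the identity $p'=3q$ enters as an exact threshold. I would resolve it by (a) peeling off the limiting scale, writing $M_\mu(\widehat f)\le\mu(\R^2)|\widehat f|+\sup_k\big(M_\mu^{(k)}(\widehat f)-\mu(\R^2)|\widehat f|\big)_+$, the first term being controlled directly by the restriction estimate, so that only genuine \emph{oscillation} of $\widehat f$ across each dyadic shell contributes; and (b) refining the thin-band estimate by further decomposing $\widehat f$ according to the physical location of $f$, so that on each piece $\widehat f$ enjoys a quantitative modulus of continuity at the corresponding scale, which upgrades the per-$k$ bound to a summable one. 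Organizing (a)--(b) so that the various gains combine — rather than merely balance — is the technical heart of the argument, and is exactly the point where the techniques of \cite{Ramos1} are used, and simplified, in the two-dimensional setting.
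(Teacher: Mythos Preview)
Your outline does not close, and the difficulty you flag as ``the technical heart of the argument'' is not actually resolved by the fixes you propose.

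Two concrete gaps. First, the ``radial transfer'' step --- the claim that $M_\mu^{(k)}(\widehat f\,\chi_{N_{C2^k}(\mathbb S^1)})$ is essentially constant in the radial direction at scale $2^k$, so that its $L^q(\mathbb S^1)$ norm is comparable to $2^{-k/q}$ times its $L^q$ norm on the band --- is not justified and is false in general. Take $\mu=\sigma_{\mathbb S^1}$: then $|\widehat f|*\sigma_t$ at scale $t\sim 2^k$ is a circular average, and its value depends sharply on the exact location of $x$ within the band; there is no mechanism that forces radial flatness. Without this step you have no way to pass from the $L^r(\R^2)$ bound on $M_\mu$ to an $L^q(\mathbb S^1)$ bound. (A smaller but related point: the reduction to $\operatorname{supp}\mu\subset B(0,1)$ by splitting $\mu$ into dyadic annuli is also not justified, since the hypothesis is on $M_\mu$, not on the maximal functions of the pieces.)

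Second, and more seriously, your per-scale estimate --- even granting the transfer --- is exactly scale-invariant at $p'=3q$, so summing in $k$ fails, as you say. Your proposed cure (a) subtracts $\mu(\R^2)|\widehat f|$, but the remainder $(|\widehat f|*\mu_t-\mu(\R^2)|\widehat f|)(x)$ is governed by the modulus of continuity of $|\widehat f|$ (not by any genuine oscillation, since the absolute value has already been taken), and that modulus is not controlled by $\|f\|_{L^p}$; cure (b) tries to manufacture such control by localising $f$ in physical space, but each localised piece $f_j$ supported in $\{|x|\sim 2^j\}$ gives $|\nabla\widehat{f_j}|\lesssim 2^j\|f_j\|_1$, and the double sum in $j,k$ does not converge in $\|f\|_{L^p}$. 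Finally, your appeal to ``the techniques of \cite{Ramos1}'' to close this step is circular: those techniques are precisely the linearization/squaring method that the paper uses, and they have nothing to do with a dyadic decomposition in~$t$.

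For contrast, the paper's proof avoids any dyadic summation in the scale. One linearizes $M_\mu(\widehat f)$ by a measurable choice of $t(x)$ and the phase $g=\overline{\widehat f}/|\widehat f|$, dualizes, and then \emph{squares the adjoint}, obtaining a bilinear expression over $\mathbb S^1\times\mathbb S^1$; the change of variables $(\omega,\omega')\mapsto x=\omega+\omega'\in B_2(0)$ produces an operator $T$ of the form $TH(\xi)=\int_{B_2(0)}\widehat{B_x}(\xi)e^{-2\pi i x\cdot\xi}H(x)\,dx$ with $B_x=\mu_{t_1(x)}*\mu_{t_2(x)}$. The key Lemma shows $T:L^\eta\to L^{\eta'}$ for $1\le\eta<2$ by pairing with $G$, using Fubini/Plancherel to rewrite $\langle TH,G\rangle=\int H(x)\,\widehat{\overline G}*B_x(x)\,dx$, and then bounding $|\widehat{\overline G}*B_x|\le M_\mu^2(\widehat{\overline G})$ together with Hausdorff--Young; this is where the $L^r$-boundedness of $M_\mu$ for $r>2$ enters, once, with no summation needed. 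The Jacobian $|\sin 2\pi(s-t)|^{-1}$ from the change of variables is then handled by Hardy--Littlewood--Sobolev, which is what produces the relation $p'=3q$.
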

In Proposition \ref{multipl} at the end of this note, we prove that Kova\v{c}'s \cite{Kovac1} assumptions on the measure \emph{imply} ours. The spherical maximal function in dimensions 2, 3 is an example that shows, as elaborated in Section \ref{multp}, that Theorems \ref{meastheo1} and \ref{meastheo2} are strictly stronger. \\

On the other hand, in \cite{MRW}, the authors, in the end of their manuscript, make use of the maximal function 
\[
M_2(h) := M(|h|^2)^{1/2},
\]
where $Mf(x) = \sup_{r>0} \dashint_{B(x,r)} |f|$ denotes the usual Hardy--Littlewood maximal function, 
to prove results about Lebesgue points of the Fourier transform on curves in the range $1 \le p < \frac{8}{7}$. In \cite{Ramos1}, this author circumvents this problem by considering a suitable linearization instead of working with $M_2.$ Our next result
combines the two approaches: 

\begin{theorem}\label{rtheo1} Let $1\le r \le 2.$ Define the maximal functions $M_rh(x) := (M(|h|^r)(x))^{1/r}.$ The following bound holds: 
\[ 
\|M_r(\widehat{f})\|_{L^q(\mathbb{S}^1)} \le C_{p,r} \|f\|_{L^p(\R^2)},
\]
where $1 \le p < \frac{4}{3}, p' \ge 3q.$ 
\end{theorem}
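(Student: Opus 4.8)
The plan is to reduce to the endpoint $r=2$ and then to exploit the linearisation from \cite{Ramos1}, which turns $M_2(\widehat f)$ into a \emph{linear} object whose adjoint is an extension operator with a mild amplitude. For any ball $B$ the quantity $\big(\dashint_B|h|^r\big)^{1/r}$ is the $L^r$-norm of $h$ against the normalised Lebesgue measure on $B$, hence non-decreasing in $r$; thus $M_rh\le M_2h$ pointwise for $1\le r\le 2$, and it is enough to prove $\|M_2(\widehat f)\|_{L^q(\mathbb S^1)}\lesssim_p\|f\|_{L^p(\R^2)}$ in the stated range. By a routine measurable selection of a radius nearly attaining the supremum together with monotone convergence, this reduces to bounding, uniformly over measurable $t\colon\mathbb S^1\to(0,\infty)$, the quantity
\[
\Big\|\Big(\dashint_{B(0,1)}|\widehat f(x+t(x)z)|^2\,\mmd z\Big)^{1/2}\Big\|_{L^q(\mathbb S^1)}=:\|Sf\|_{L^q_x(L^2_z)},
\]
where $Sf(x,z)=\widehat f(x+t(x)z)$ (the $L^2_z$-average being over $B(0,1)$ with its normalised measure, after the change of variables $y=x+t(x)z$). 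Since $S$ is \emph{linear} in $f$, by duality this is equivalent to
\[
\|S^{*}G\|_{L^{p'}(\R^2)}\lesssim\|G\|_{L^{q'}_x(L^2_z)},\qquad S^{*}G(\xi)=\int_{\mathbb S^1}e^{2\pi i x\cdot\xi}\,\gamma_x(t(x)\xi)\,\mmd\sigma(x),
\]
where $\gamma_x$ is a fixed multiple of the inverse Fourier transform of $G(x,\cdot)\chi_{B(0,1)}$. The key features are that $|\gamma_x(\zeta)|\le\|G(x,\cdot)\|_{L^2_z}=:\beta(x)$ for every $\zeta$ and that $\gamma_x$ is ``concentrated'' where $|\zeta|\lesssim 1$; so $S^{*}$ is an extension operator from $\mathbb S^1$ carrying an amplitude bounded by $\beta(x)$ and essentially supported in $\{|t(x)\xi|\lesssim 1\}$.

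If $t$ were constant $=t_0$ and $\gamma_x(\zeta)=\beta(x)m(\zeta)$ with $\|m\|_\infty\le 1$, then $S^{*}G(\xi)=m(t_0\xi)\,\widehat{\beta\,\mmd\sigma}(-\xi)$ and the desired bound would be precisely the adjoint of the restriction inequality \eqref{restriction1} for $\mathbb S^1\subset\R^2$, which is classical and valid exactly for $1\le p<\tfrac43$, $p'\ge 3q$. For general $t$ and the genuinely two-variable amplitude I would first split off $\{t>1\}$, where the primal quantity is controlled directly by $\dashint_{B(0,1)}|\widehat f(x+t(x)z)|^2\,\mmd z\le|B(0,t(x))|^{-2/p'}\|\widehat f\|_{L^{p'}}^2\lesssim_p\|f\|_{L^p}^2$ (Hölder and Hausdorff--Young) together with the finiteness of $\mathbb S^1$. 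On $\{t\le1\}$ I would decompose into dyadic level sets $E_j=\{2^{j-1}<t\le 2^j\}$, $j\le 0$, and localise $\xi$ to dyadic annuli; on each resulting piece the amplitude $\gamma_x(t(x)\xi)$ is $O(\beta(x))$ and slowly varying (at frequency scale $\sim 2^{-j}$ in $\xi$), so the adjoint restriction estimate applies with a loss that is a fixed positive power of $2^{-j}$ — the circle of radius $\sim 2^{-j}$ being flat at unit scale — compensated by the restriction of $\gamma_x$ to $|t(x)\xi|\lesssim 1$. One then sums over $j\le 0$, using the disjointness of the $E_j$ and the frequency separation.

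The reason for routing the argument through the linearisation, rather than applying Theorem~\ref{meastheo1} to $\widehat{f\ast\widetilde f}=|\widehat f|^2$ and then invoking Young's inequality $\|f\ast\widetilde f\|_{L^P}\le\|f\|_{L^{2P/(P+1)}}^2$, is that \eqref{restriction1} is now only ever used for $f$ itself (through its adjoint), so no convolution loss is paid — tracking exponents shows that the Young-based route is exactly what truncates the range at $p<\tfrac87$ in \cite{MRW}. Accordingly, the main obstacle is the dyadic summation over the level sets $E_j$: one must carry it out while keeping the constant uniform, which forces the loss from rescaling the restriction estimate to a circle of radius $2^{-j}$ and the gain from the amplitude's concentration to cancel precisely at the sharp exponents $1\le p<\tfrac43$, $p'\ge 3q$, with no slack. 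The analogue here of the hypothesis $M_\mu\colon L^s\to L^s$, $s>2$, from Theorem~\ref{meastheo1} costs nothing: the only maximal operator involved is the Hardy--Littlewood one applied to $|\widehat f|^r$, which is bounded on every $L^s$, $s>1$, and this is always available for $r\le 2$.
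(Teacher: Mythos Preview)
Your reduction to $r=2$ by monotonicity of power means is fine, as is the treatment of the region $\{t>1\}$ via H\"older and Hausdorff--Young. The gap is in the core of the argument, the region $\{t\le 1\}$. What you need there is an adjoint restriction estimate for the operator
\[
h\longmapsto \int_{\mathbb S^1} h(x)\,e^{2\pi i x\cdot\xi}\,\gamma_x\big(t(x)\xi\big)\,\mmd\sigma(x),
\]
and you propose to obtain it by dyadically decomposing in the level sets of $t$ and invoking the standard extension inequality on each piece. But the standard inequality is the case of a \emph{tensorised} amplitude $a(x,\xi)=\beta(x)m(\xi)$; your amplitude genuinely couples $x$ and $\xi$, and moreover is only \emph{measurable} in $x$ (since $t(\cdot)$ is merely measurable). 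Upgrading from the restriction theorem to an oscillatory integral estimate with amplitude is the content of H\"ormander's/Carleson--Sj\"olin's theorem, which requires smooth amplitudes so that one can integrate by parts or perform stationary phase; with a non-smooth $t(\cdot)$ there is no obvious mechanism for this, and ``slowly varying in $\xi$'' alone does not give it. You yourself flag the dyadic summation as ``the main obstacle'' and state that the loss and gain must cancel ``precisely \dots\ with no slack'', but you do not exhibit any such cancellation. As written, this is a plan rather than a proof.

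The paper avoids this obstacle entirely by not attempting to bound the adjoint directly. Instead it follows the Carleson--Sj\"olin strategy literally: one \emph{squares} the adjoint, changes variables $(\omega,\omega')\in\mathbb S^1\times\mathbb S^1\mapsto x=\omega+\omega'\in B_2(0)$, and obtains an operator $T_rH(\xi)=\int_{B_2(0)}H(x)e^{-2\pi i x\cdot\xi}\widehat{\mathcal B_x}(\xi)\,\mmd x$ with $\mathcal B_x=\mathcal A_{\omega}*\mathcal A_{\omega'}$. The key step is then a pointwise lemma (Lemma~\ref{mainlemma2}): for any $u$, $u*\mathcal B_x\le C\,M_r(M_ru)$, uniformly in $x$, using only H\"older together with the support/normalisation of $\mathcal A_x$. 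Pairing $T_rH$ against $G$, Fubini and Plancherel give $\int H(x)\,\widehat{\overline G}*\mathcal B_x(x)\,\mmd x$, and the lemma plus Hausdorff--Young and the $L^{p'}$-boundedness of $M_r$ (valid because $p'>2\ge r$) give $\|T_rH\|_{p'/2\to (p'/2)'}$ bounds. The Jacobian singularity $|\sin(2\pi(s-t))|^{-1}$ hidden in $H$ is then removed by Hardy--Littlewood--Sobolev, which is precisely what produces the sharp range $p'\ge 3q$. No regularity of the linearising data $t(\cdot),g_x$ is ever used---the measurability is absorbed into the pointwise domination by the iterated maximal function.
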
 

The main feature in the proofs of these Theorems is the linearization method employed in \cite{Ramos1} 
together with Lemmata \ref{mainlemma} and \ref{mainlemma2}. These, on the other hand, provide a way to bypass the interpolation scheme employed in \cite[Lemmata~1~and~2]{Ramos1}. Also, in the case where one takes $\mmd \mu = \mmd \sigma_{\mathbb{S}^1}$ to be the arc-length measure in the circle
the interpolation idea fails due to the lack of $L^2(\R^2)$ bounds for maximal functions, whereas working directly with the aid of the Hausdorff--Young inequality gives us the result, as long as the measure we consider satisfies the above conditions. 
By the celebrated result of Bourgain \cite{Bourgain1}, this is \emph{exactly} the case for the circular maximal function in dimension 2. \\ 

In Section \ref{knapp}, we present two different kinds of counterexamples, in order to impose restrictions on $r$ so that Theorem \ref{rtheo1} can hold. Both the examples yield the same $r \le 4$ bound, whereas Theorem \ref{rtheo1} only works in the $r \le 2$ case. 
One is led to pose the following question:

\begin{quest}\label{rquest1} Can the two-dimensional full range of maximal restriction inequalities hold for $M_{s}, 2< s \le 4?$ 
\end{quest} 

\subsection{Three-dimensional results} In dimension 3, our main theorems deal with the Tomas-Stein exponent case, in both the context of measures as well as in the context of $M_r-$maximal functions: 

\begin{theorem}\label{meastheo2} 
Let Let $\mu$ be a positive, finite Borel measure defined in $\R^3$, and suppose that the maximal function 
\[
M_{\mu} g(x) := \sup_{t > 0} |g|*\mu_t(x).
\]
is bounded from $L^2(\R^3) \to L^2(\R^3).$ Then the following bound holds: 
\[ 
\|M_{\mu}(\widehat{f})\|_{L^2(\mathbb{S}^2)} \le C_{p,\mu} \|f\|_{L^{p}(\R^3)},
\]
where $1 \le p \le \frac{4}{3}.$ 
\end{theorem}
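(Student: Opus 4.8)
The plan is to run the linearisation scheme of \cite{Ramos1} and then, in place of the interpolation argument used there (which is unavailable at this endpoint), to combine the Tomas--Stein restriction estimate with the $L^2(\R^3)\to L^2(\R^3)$ bound for $M_\mu$ by means of the auxiliary Lemmata \ref{mainlemma} and \ref{mainlemma2}.

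First I would reduce to $f\in\mathcal S(\R^3)$, the general case following by density since the claimed estimate is a priori in nature, and then linearise the supremum: by a standard measurable selection one has $M_\mu(\widehat f)(x)=\sup_t|\widehat f\ast\mu_{t(x)}(x)|$ up to an arbitrarily small error, so it suffices to bound $\|\widehat f\ast\mu_{t(\cdot)}(\cdot)\|_{L^2(\mathbb{S}^2)}$ uniformly over measurable $t\colon\mathbb{S}^2\to(0,\infty)$. The conceptually central step is then the pointwise Cauchy--Schwarz inequality
\[
\big(|\widehat f|\ast\mu_t(x)\big)^2\le\|\mu\|\,\big(|\widehat f|^2\ast\mu_t\big)(x),
\]
which, after taking the supremum in $t$, yields $M_\mu(\widehat f)(x)^2\le\|\mu\|\,M_\mu(|\widehat f|^2)(x)$ and reduces matters to the $2$-average estimate
\[
\int_{\mathbb{S}^2}M_\mu(|\widehat f|^2)(x)\,\mmd\sigma(x)\lesssim_\mu\|f\|_{L^{4/3}(\R^3)}^2 .
\]
Here Hausdorff--Young enters through $\||\widehat f|^2\|_{L^2(\R^3)}=\|\widehat f\|_{L^4(\R^3)}^2\lesssim\|f\|_{L^{4/3}(\R^3)}^2$, so that $|\widehat f|^2$ is an admissible $L^2(\R^3)$ input for the maximal operator.

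Next I would split the supremum defining $M_\mu$ into the ranges $t\ge1$ and $0<t<1$, the scale $1$ being that of $\mathbb{S}^2$. For the global part $t\ge1$, linearising once more and pushing $\sigma$ forward,
\[
\int_{\mathbb{S}^2}\big(|\widehat f|^2\ast\mu_{s(x)}\big)(x)\,\mmd\sigma(x)=\int_{\R^3}|\widehat f|^2\,\mmd\nu ,
\]
where $\nu$ is the image of $\sigma\times\mu$ under $(x,y)\mapsto x-s(x)y$ with $s\ge1$; Lemma \ref{mainlemma} supplies $\|\mmd\nu/\mmd x\|_{L^2(\R^3)}\lesssim_\mu1$ uniformly in the selection $s(\cdot)$ --- this is exactly the step that uses the hypothesis that $M_\mu$ is bounded on $L^2(\R^3)$ --- and then Hölder together with the Hausdorff--Young bound above closes this case. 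For the local part $0<t<1$ the density of the analogous push-forward concentrates near $\mathbb{S}^2$ and fails to lie in $L^2(\R^3)$ uniformly in $t$, so here one must genuinely exploit the curvature of the sphere: Lemma \ref{mainlemma2} controls this density, and feeding it into the Tomas--Stein estimate on the dilated spheres $r\mathbb{S}^2$, $r\in[1/2,2]$ --- which by scaling carries an extra factor $r^{1/2}$ that absorbs the radial singularity --- makes the resulting radial integral converge to an acceptable bound.

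I expect the main obstacle to be precisely this interplay, inside the two Lemmata, between the restriction estimate and the $L^2$-maximal bound: neither ingredient suffices alone, since on the one hand $L^2(\R^3)$ functions do not restrict to $\mathbb{S}^2$, while on the other the bare decay of $\widehat\mu$ is too weak in dimension three; the content of Lemmata \ref{mainlemma}--\ref{mainlemma2} is to quantify how the $\sigma$-average over $\mathbb{S}^2$ of a $\mu$-dilated family smooths out into an $L^2(\R^3)$ density while remaining compatible with the Tomas--Stein scaling. Once $p=4/3$ is settled, the range $1\le p<4/3$ follows either by rerunning the scheme --- now $|\widehat f|^2\in L^{p'/2}(\R^3)$ with $p'/2>2$, which only gives more room --- or, more cheaply, by interpolating the linearised operators with the trivial endpoint bound $\|M_\mu(\widehat f)\|_{L^\infty(\mathbb{S}^2)}\le\|\mu\|\,\|f\|_{L^1(\R^3)}$ at $p=1$.
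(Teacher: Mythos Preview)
Your Cauchy--Schwarz step is where the scheme breaks. After writing $M_\mu(\widehat f)(x)^2\le\|\mu\|\,M_\mu(|\widehat f|^2)(x)$ you propose to bound $\int_{\mathbb{S}^2}M_\mu(F)\,\mmd\sigma$ for $F=|\widehat f|^2$, retaining only $\|F\|_{L^2(\R^3)}\lesssim\|f\|_{4/3}^2$. But the estimate $\int_{\mathbb{S}^2}M_\mu(F)\,\mmd\sigma\lesssim\|F\|_{L^2}$ is false even for Hardy--Littlewood: take $F=\varepsilon^{-1/2}\chi_{A_\varepsilon}$ with $A_\varepsilon$ the $\varepsilon$-neighbourhood of $\mathbb{S}^2$; then $\|F\|_2\sim1$ while $MF\gtrsim\varepsilon^{-1/2}$ on $\mathbb{S}^2$. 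You anticipate this obstruction in the local range $t<1$ and propose to recover the structure $F=|\widehat f|^2$ by foliating the push-forward $\nu$ into dilated spheres and applying Tomas--Stein on each leaf. This would work for a \emph{fixed} dilation, but after linearisation the selection $t(\cdot)$ depends measurably on $x\in\mathbb{S}^2$: the image of $\mathbb{S}^2$ under $x\mapsto x-t(x)y$ is not a sphere, nor any surface with uniform curvature, so there is no radial decomposition of $\nu$ to feed into Tomas--Stein. Separately, neither Lemma~\ref{mainlemma} nor Lemma~\ref{mainlemma2} asserts an $L^2$ density bound for such push-forwards; Lemma~\ref{mainlemma} is a two-dimensional $L^p\to L^{p'}$ bound for a Fourier-side operator, and Lemma~\ref{mainlemma2} is the pointwise estimate $u\ast\mathcal B_x\le C\,M_r(M_r u)$, so your invocations of them do not match their content.

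The paper's argument avoids the loss by keeping the bilinear structure intact rather than collapsing it with Cauchy--Schwarz. One dualises to the adjoint and writes $\|\mathcal M^\ast h\|_{L^4(\R^3)}^2=\|(\mathcal M^\ast h)^2\|_{2}$, then pairs with an arbitrary $G\in L^2(\R^3)$. After expanding the square and applying Fubini/Plancherel, the $\xi$-integral becomes $\widehat G\ast S_{x_1}\ast S_{x_2}(x_1+x_2)$, and since each $\mmd S_{x_i}=g(x_i-\cdot)\,\mmd\mu_{t(x_i)}$ with $|g|\le1$, this is pointwise dominated by $M_\mu(M_\mu\widehat G)(x_1+x_2)$. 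The remaining integral
\[
\int_{\mathbb{S}^2\times\mathbb{S}^2}|h(x_1)h(x_2)|\,M_\mu(M_\mu\widehat G)(x_1+x_2)\,\mmd\sigma(x_1)\,\mmd\sigma(x_2)
\]
is then bounded by $\|h\|_{L^2(\mathbb{S}^2)}^2\|M_\mu(M_\mu\widehat G)\|_{L^2(\R^3)}$ via the bilinear form of Tomas--Stein, namely $\|(v\,\mmd\sigma)\ast(v\,\mmd\sigma)\|_2\lesssim\|v\|_{L^2(\mathbb{S}^2)}^2$, and one finishes with two applications of the $L^2$-boundedness of $M_\mu$ to $\widehat G$. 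The key difference from your plan is that Tomas--Stein is applied to $h$ on $\mathbb{S}^2\times\mathbb{S}^2$ and the maximal bound to the \emph{dual} function $\widehat G$, so neither ingredient is asked to do the other's job; no scale splitting and no push-forward density are needed.
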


\begin{theorem}\label{rtheo2} Let $1 \le r < 2.$ Then the following bound holds: 
\[ 
\|M_r(\widehat{f})\|_{L^2(\mathbb{S}^2)} \le C_{p,r} \|f\|_{L^{p}(\R^3)},
\]
where $1 \le p \le \frac{4}{3}.$ Aditionally, the quadratic maximal function $M_2$ satisfies that 
\[
\|M_2(\widehat{f})\|_{L^2(\mathbb{S}^2)} \le C_{p,r} \|f\|_{L^{p}(\R^3)},
\]
whenever $1 \le p < \frac{4}{3}.$
\end{theorem}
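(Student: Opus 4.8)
The plan is to transplant to the three-dimensional Stein--Tomas setting the linearization scheme of \cite{Ramos1} that underlies Theorem~\ref{meastheo2}, absorbing the extra exponent $r$ through Jensen's inequality and the new Lemmata~\ref{mainlemma}, \ref{mainlemma2}; the analytic inputs are the Stein--Tomas inequality $\|\widehat{g}|_{\mathbb{S}^{2}}\|_{L^{2}(\mathbb{S}^{2})}\lesssim\|g\|_{L^{4/3}(\R^{3})}$, the Hausdorff--Young inequality, and the $L^{s}(\R^{3})$-boundedness of $M_{r}$ for $s>r$. Consider first $1\le r<2$; by density one may take $f$ Schwartz. The endpoint $p=1$ is trivial, since $M_{r}(\widehat{f})(x)=\big(\sup_{t>0}\dashint_{B(x,t)}|\widehat{f}|^{r}\big)^{1/r}\le\|\widehat{f}\|_{L^{\infty}(\R^{3})}\le\|f\|_{L^{1}(\R^{3})}$ yields $\|M_{r}(\widehat{f})\|_{L^{2}(\mathbb{S}^{2})}\le\sigma(\mathbb{S}^{2})^{1/2}\|f\|_{L^{1}}$. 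Linearizing the maximal function --- replacing the supremum in $t$ by a measurable selection $t\colon\mathbb{S}^{2}\to(0,\infty)$ --- turns $f\mapsto M_{r}(\widehat{f})$ into a supremum over $t(\cdot)$ of norms of the \emph{linear} operators $T_{t}\colon f\mapsto\big(x\mapsto\widehat{f}\big|_{B(x,t(x))}\big)$ into $L^{2}_{x}\big(\mathbb{S}^{2};L^{r}(B(x,t(x)),\mathrm{norm.})\big)$; since $T_{t}$ has operator norm $\le\sigma(\mathbb{S}^{2})^{1/2}$ from $L^{1}$, uniformly in $t(\cdot)$, complex interpolation reduces everything to the single uniform endpoint bound
\[
\Big\|\big(A_{t(\cdot)}(|\widehat{f}|^{r})\big)^{1/r}\Big\|_{L^{2}(\mathbb{S}^{2})}\lesssim\|f\|_{L^{4/3}(\R^{3})},\qquad A_{s}h(x):=\dashint_{B(x,s)}h .
\]

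This uniform estimate is exactly the content of Lemmata~\ref{mainlemma} and~\ref{mainlemma2}. I would prove them by decomposing $\mathbb{S}^{2}$ dyadically according to the size of $t(x)$, estimating the contribution of each annular scale by a \emph{translated} Stein--Tomas inequality applied to the part of the ball $x+t(x)B(0,1)$ lying near $\mathbb{S}^{2}$ and by the Hausdorff--Young inequality for the part lying far from it, and then summing over scales; the series converges precisely because $M_{r}$ is bounded on $L^{2}(\R^{3})$ when $r<2$, which is what secures the endpoint $p=\tfrac{4}{3}$. One can equivalently phrase the reduction via Jensen: since $2/r\ge1$, $\big(A_{t(x)}(|\widehat{f}|^{r})(x)\big)^{2/r}\le A_{t(x)}(|\widehat{f}|^{2})(x)$, which already brings the $L^{2}(\mathbb{S}^{2})$-norm down to $\int_{\mathbb{S}^{2}}A_{t(x)}(|\widehat{f}|^{2})(x)\,\mmd\sigma(x)=|B(0,1)|^{-1}\int_{|w|\le1}\int_{\mathbb{S}^{2}}|\widehat{f}(x+t(x)w)|^{2}\,\mmd\sigma(x)\,\mmd w$; this is sharp for $r=2$, but for $r<2$ one retains the slack of the $L^{r}(\R^{3})$-bounds.

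For the quadratic maximal function $M_{2}$ I would use that $M_{2}(\widehat{f})(x)=\sup_{1\le r<2}M_{r}(\widehat{f})(x)$ for every $x$ --- the $L^{r}$-mean over a fixed ball increases to the $L^{2}$-mean as $r\uparrow2$, and the suprema over $t$ and over $r$ commute --- so that by monotone convergence $\|M_{2}(\widehat{f})\|_{L^{2}(\mathbb{S}^{2})}=\lim_{r\uparrow2}\|M_{r}(\widehat{f})\|_{L^{2}(\mathbb{S}^{2})}$. Tracking the constant in the first part, it remains bounded as $r\uparrow2$ as long as $p<\tfrac{4}{3}$ (the deterioration at $r=2$, where $M_{2}$ only obeys $L^{2+\varepsilon}$-bounds and Hausdorff--Young then forces $p'>4$, being offset by the room available for $p<\tfrac{4}{3}$); letting $r\uparrow2$ gives the claim. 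Equivalently one re-runs the argument above with the $L^{2+\varepsilon}(\R^{3})$-boundedness of $M_{2}$ in place of the $L^{2}$-boundedness of $M_{r}$.

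The main obstacle is Lemmata~\ref{mainlemma} and~\ref{mainlemma2}, i.e. the uniform-in-$t(\cdot)$ estimate. Because $x\mapsto x+t(x)w$ is merely measurable, the set $\{x+t(x)w:x\in\mathbb{S}^{2}\}$ is not a regular perturbation of $\mathbb{S}^{2}$ and Stein--Tomas cannot be invoked directly on it; one must pass to the dyadic-in-$t$ decomposition, on each scale of which a translate of Stein--Tomas is used with a constant uniform in the translation, while the error coming from the portion of the ball far from the sphere is controlled through Hausdorff--Young. The resulting sum over scales is robustly convergent for $r<2$ but only borderline for $r=2$, which is exactly why the quadratic case costs the endpoint.
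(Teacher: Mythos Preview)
Your proposal has a genuine gap at the central step. You correctly set up the linearization and correctly observe that Jensen gives $M_r\le M_2$ pointwise, but you never actually prove the key uniform-in-$t(\cdot)$ estimate
\[
\Big\|\big(A_{t(\cdot)}(|\widehat{f}|^{r})\big)^{1/r}\Big\|_{L^{2}(\mathbb{S}^{2})}\lesssim\|f\|_{L^{4/3}(\R^{3})}.
\]
Your suggested route --- a dyadic-in-$t(x)$ decomposition of $\mathbb{S}^2$, followed by ``translated Stein--Tomas'' on each scale --- is speculative and, as you acknowledge in your final paragraph, runs straight into the obstacle that $\{x+t(x)w:x\in\mathbb{S}^2\}$ has no regularity. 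You have not explained how to sum the scales, what the ``error far from the sphere'' is, or why the series converges; and Lemmata~\ref{mainlemma} and~\ref{mainlemma2} of the paper are two-dimensional statements about operators $f\mapsto\int \widehat{B_x}(\xi)e^{-2\pi i x\cdot\xi}f(x)\,\mmd x$ and about convolutions $u*\mathcal{B}_x$, not about any dyadic decomposition of the sphere --- so invoking them as black boxes for your three-dimensional dyadic scheme is a non sequitur.

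The paper proceeds by a completely different mechanism, modelled on Vitturi's argument. One dualizes, \emph{squares} the adjoint extension operator, and expands the square as a double integral over $\mathbb{S}^2\times\mathbb{S}^2$. Pairing with a test function $F$ and applying Fubini and Plancherel, the inner integral becomes $\widehat{F}*\mathcal{S}_{\omega_1}*\mathcal{S}_{\omega_2}(\omega_1+\omega_2)$, where $\mathcal{S}_{\omega}(y)=g_\omega(\omega-y)\chi_{t(\omega)}(y)$ is the linearizing kernel. The crucial pointwise bound --- this is where Lemma~\ref{mainlemma2} genuinely enters --- is $|\widehat{F}*\mathcal{S}_{\omega_1}*\mathcal{S}_{\omega_2}|\le C\,M_r(M_r\widehat{F})$, uniformly in $\omega_1,\omega_2$. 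This reduces the whole problem to
\[
\int_{\mathbb{S}^2\times\mathbb{S}^2}|h(\omega_1)h(\omega_2)|\,W(\omega_1+\omega_2)\,\mmd\sigma(\omega_1)\,\mmd\sigma(\omega_2)\lesssim\|h\|_{L^2(\mathbb{S}^2)}^2\|W\|_{L^p(\R^3)}
\]
with $W=M_r(M_r\widehat{F})$, and this bilinear inequality is Lemma~\ref{tomasstein}, proved by interpolating the trivial $p=\infty$ case with the $p=2$ case (the latter being Tomas--Stein in the form $\|(v_1\,\mmd\sigma)*(v_2\,\mmd\sigma)\|_2\lesssim\|v_1\|_2\|v_2\|_2$). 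One finishes with Hausdorff--Young and the $L^\eta$-boundedness of $M_r$: for $r<2$ one can take $\eta=2$, which secures the endpoint $p=4/3$; for $r=2$ one must take $\eta>2$, which forces $p<4/3$. No decomposition in $t$ is needed anywhere --- the measurable selection is absorbed entirely by the pointwise maximal bound. Your Jensen observation and the limit $r\uparrow2$ are not how the paper handles $M_2$; the paper treats $M_2$ \emph{first} and directly, then deduces the $r<2$, $p<4/3$ range from $M_r\le M_2$, and handles the endpoint $(r<2,\,p=4/3)$ by rerunning the argument with $\eta=2$.
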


We prove these results in Section \ref{threedim} by merging the ideas in Theorems \ref{meastheo1} and \ref{rtheo1} with Vitturi's method. As a by-product, the counterexamples built in Section \ref{twodimm} provide us with the restriction that 
$s \le 2$ in order for Theorem \ref{rtheo2} to hold. In particular, a further use of one of these counterexamples in higher dimensions gives us as a direct corollary that the only dimensions in which a full-range restriction result for the \emph{strong} maximal function 
\[
M_{\mathcal{S}} f(x) := \sup_{\substack{R \text{ axis parallel}, \\ \text{centered at } x}} \dashint_R |f| 
\]
of the Fourier transform could hold are $d=2,3$. We talk about this property in more detail in Proposition \ref{sharpp}. 

\subsection{Notation} 
In what follows, we denote $A \lesssim B$ to mean that $A \le C \cdot B,$ for some universal constant $C>0$. If we let $C$ depend on a parameter $\alpha,$ we write $A \lesssim_{\alpha} B.$ We suppress this notation in case the specific dependence on $\alpha$ is not important. 
We also normalize the Fourier transform as $\mathcal{F}f(\xi) = \widehat{f}(\xi) = \int_{\R^d} e^{-2 \pi i x \cdot \xi} f(x) \, \mmd x.$ Finally, we often write $\dashint_B g := \frac{1}{|B|} \int_B g$ for the average of $g$ over a set $B$. \\

\noindent \textbf{Acknowledgements.} The author would like to thank Felipe Gon\c calves for helpful discussions which led to the proof of Theorem \ref{meastheo1} and to a great deal of inspiration for the other results in this manuscript. He is also indebted to
his supervisor Prof. Dr. Christoph Thiele for reading this manuscript and giving advice on how to improve the presentation. The author also acknowledges finantial support
by the Deutscher Akademischer Austauschdient (DAAD). 

\section{Proof of Theorems \ref{meastheo1} and \ref{rtheo1}}\label{twodim}

\subsection{Proof of Theorem \ref{meastheo1}} The basic outline of the proof is essentially the same as in the proof of \cite[Theorem~1]{Ramos1}. After using the Kolmogorov--Saliverstov linearization method and letting $g(z) = \frac{\overline{\widehat{f}(z)}}{|\widehat{f}(z)|},$ it suffices to prove bounds for 
\[ 
M_{\mu,g,t(\cdot)}f(x) = \int_{\R^2} \widehat{f}(x-y) g(x-y) \mmd \mu_{t(x)} (y).
\]
Here, we actually regard $M_{\mu,g,t(\cdot)}$ as an operator with a \emph{fixed} $g$, prove bounds for it and then substitute the chosen $g$ above. An application of Plancherel's Theorem implies that 
\[
M_{\mu,g,t(\cdot)}f(x) = \int_{\R^2} f(\xi) e^{2\pi i x \cdot \xi} \widehat{A_x}(\xi) \mmd \xi,
\]
where $\mmd A_x(y) := g(x-y)\, \mmd \mu_{t(x)}(y).$ A dualization argument then implies that Theorem \ref{meastheo1} is equivalent to proving 
\[ 
M_{\mu,g,t(\cdot)}^{*} h(\xi) = \int_{\mathbb{S}^1} h(\omega) e^{-2\pi i \xi \cdot \omega} \widehat{A_{\omega}}(\xi) \mmd \sigma_{\mathbb{S}^1}(\omega)
\]
to be bounded from $L^{q'}(\mathbb{S}^1) \to L^{p'}(\R^2).$ Just like in the proof of \cite[Lemma~2]{Ramos1}, we write $\|M_{\mu,g,t(\cdot)}^{*} h\|_{p'} = 
\| (M_{\mu,g,t(\cdot)}^{*} h)^2\|_{p'/2}^{1/2}.$ Expanding the square gives
\[ 
(M_{\mu,g,t(\cdot)}^{*} h)^2(\xi) = \int_{\mathbb{S}^1 \times \mathbb{S}^1} h(\omega) \, h(\omega') \, e^{- 2 \pi i (\omega+\omega') \cdot \xi} \widehat{A_{\omega}}(\xi) \widehat{A_{\omega'}}(\xi) \,\mmd \sigma_{\mathbb{S}^1}(\omega) \, \mmd \sigma_{\mathbb{S}^1}(\omega'). 
\]
We perform two changes of variable: first, we parametrise the circle by $z(r) = (\cos(2\pi r),\sin(2\pi r)).$ After that, we take a pair of points $(t,s),t>s,$ into the point $x := z(t) + z(s).$ This map is easily seen to be a bijection from 
\[
\Delta := \{ (t,s) \in [0,1)^2, t> s\} \text{ to } B_2(0) \subset \R^2.
\]
After a calculation, we rewrite our operator as 
\[
(M_{\mu,g,t(\cdot)}^{*} h)^2(\xi) = 2 \int_{B_2(0)} H(x) e^{- 2 \pi i x \cdot \xi} \widehat{B_x}(\xi) \,\mmd x, 
\]
where 
\begin{equation}\label{fourier}
\widehat{B_x}(\xi) := \widehat{A_{z(t)}}(\xi) \widehat{A_{z(s)}}(\xi), 
\end{equation}
\[
H(x) := \frac{h(z(s))h(z(t))}{|\det(z'(s),z'(t))|} = \frac{h(z(s))h(z(t))}{4\pi^2|\sin(2\pi(s-t))|}.
\]
Notice that the factor 2 multiplying the integral comes from considering twice the contribution from the upper triangle. The representation for our squared operator leads us to our main Lemma, which is a generalization of \cite[Lemma~2]{Ramos1}:

\begin{lemma}\label{mainlemma} Let, for every $x \in \R^2,$ $B_x = \mu_{t_1(x)} * \cdots \mu_{t_k(x)}$ be  the convolution product of dilates $\mu_{t_1(x)},...,\mu_{t_k(x)}$ of a finite Borel measure such that
\begin{equation}\label{maxim}
\|M_{\mu} \|_{r \to r} < + \infty, \, \forall r >2.
\end{equation}
Assume, in addition, that the map $x \mapsto B_x$ is in $L^{\infty}(M(\R^2)),$ where $M(\R^2)$ denotes the space of finite Borel measures on $\R^2.$ If  
\[
Tf(\xi) = \int_{\R^2} \widehat{B_x}(\xi) e^{-2 \pi i x \cdot \xi} f(x) \,\mmd x, 
\]
then $T$ maps $L^p(\R^2)$ to $L^{p'}(\R^2)$ boundedly for $1 \le p <2.$ 
\end{lemma}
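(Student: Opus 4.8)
The plan is to prove the bound by duality, reducing it to an iterated maximal--function estimate in which Hausdorff--Young does the essential work. By density it suffices to establish
\[
|\langle Tf, h\rangle| \le C\,\|f\|_{L^p(\R^2)}\,\|h\|_{L^p(\R^2)}\qquad\text{for all }f,h\in\mathcal S(\R^2),
\]
where $\langle u,v\rangle := \int_{\R^2} u\bar v$ and $p'$ is the conjugate exponent of $p$: by the converse of Hölder's inequality (and, when $p=1$, the isometric embedding $L^\infty\hookrightarrow (L^1)^\ast$) this gives $\|Tf\|_{L^{p'}}\le C\|f\|_{L^p}$ first on $\mathcal S(\R^2)$ and then, by density, on all of $L^p(\R^2)$. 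Inserting the definition of $T$, using $\widehat{B_x}(\xi) = \int_{\R^2} e^{-2\pi i y\cdot\xi}\,\mmd B_x(y)$, and applying Fubini (legitimate since $\sup_{x}\|B_x\|_{M(\R^2)}<\infty$ by hypothesis and $f,h\in\mathcal S$) collapses the $\xi$--integral into an inverse Fourier transform of $h$, so that with $\check h(u):=\int_{\R^2}e^{2\pi i u\cdot\xi}h(\xi)\,\mmd\xi$,
\[
\langle Tf,h\rangle = \int_{\R^2} f(x)\,\Big(\int_{\R^2}\overline{\check h(x+y)}\,\mmd B_x(y)\Big)\,\mmd x,\qquad\text{hence}\qquad |\langle Tf,h\rangle| \le \int_{\R^2}|f(x)|\,\Phi_h(x)\,\mmd x,
\]
where $\Phi_h(x):=\int_{\R^2}|\check h(x+y)|\,\mmd|B_x|(y)$.

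The heart of the argument is a pointwise bound on $\Phi_h$. Since $B_x=\mu_{t_1(x)}*\cdots*\mu_{t_k(x)}$, the total--variation inequality for convolutions gives $|B_x|\le|\mu|_{t_1(x)}*\cdots*|\mu|_{t_k(x)}$ (this is exactly the form in which the Lemma is applied in Theorem~\ref{meastheo1}, where by \eqref{fourier} and $|g|\le1$ one has $|B_x|=|A_{z(t)}*A_{z(s)}|\le\mu_{t(z(t))}*\mu_{t(z(s))}$). Expanding the convolution and writing $\tilde\nu$ for the reflection $\tilde\nu(E):=\nu(-E)$ of a measure $\nu$, one obtains
\[
\Phi_h(x) \le \int_{\R^2}\!\!\cdots\!\!\int_{\R^2}|\check h(x+y_1+\cdots+y_k)|\,\mmd|\mu|_{t_1(x)}(y_1)\cdots\mmd|\mu|_{t_k(x)}(y_k) = \big(|\check h|*\widetilde{|\mu|}_{t_1(x)}*\cdots*\widetilde{|\mu|}_{t_k(x)}\big)(x).
\]
Bounding the innermost convolution pointwise by $M_{\widetilde{|\mu|}}\check h$, then applying the same estimate to the result, and iterating $k$ times, yields $\Phi_h(x)\le\big(M_{\widetilde{|\mu|}}\big)^{k}\check h\,(x)$, the $k$--fold composition evaluated at $x$. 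Hölder's inequality then gives $|\langle Tf,h\rangle|\le\|f\|_p\,\big\|\big(M_{\widetilde{|\mu|}}\big)^{k}\check h\big\|_{p'}$.

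It remains to control $\big\|\big(M_{\widetilde{|\mu|}}\big)^{k}\check h\big\|_{p'}$ by $\|h\|_p$, and this is precisely where the two hypotheses are spent. Since $1\le p\le2$, Hausdorff--Young gives $\|\check h\|_{p'}\le\|h\|_p$; and since $p<2$ forces $p'>2$, the maximal hypothesis \eqref{maxim} (together with $\|M_{\widetilde{|\mu|}}\|_{r\to r}=\|M_{|\mu|}\|_{r\to r}$, and the trivial bound at $p'=\infty$ when $p=1$) yields $\big\|\big(M_{\widetilde{|\mu|}}\big)^{k}\check h\big\|_{p'}\le\|M_{|\mu|}\|_{p'\to p'}^{\,k}\,\|h\|_p$. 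Combining the displays gives the claimed bound with $C=\|M_{|\mu|}\|_{p'\to p'}^{\,k}$, and the range $1\le p<2$ is exactly the overlap of ``$p\le2$ for Hausdorff--Young'' and ``$p'>2$ for the maximal bound''. The point that really matters --- and what lets this argument bypass the interpolation scheme of \cite[Lemmata~1~and~2]{Ramos1} --- is that the loss in Hausdorff--Young, landing in $L^{p'}$ rather than in $L^2(\R^2)$, is exactly what makes the maximal operator available, so that no $L^2(\R^2)$ maximal bound is used; this is what will permit the choice $\mmd\mu=\mmd\sigma_{\mathbb{S}^1}$ via Bourgain's theorem \cite{Bourgain1}. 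I expect the only remaining obstacles to be bookkeeping: justifying the Fubini interchanges and the measurability of the iterated maximal function (routine, using the dense class together with the standing assumption that $M_\mu$ is a bounded operator), and invoking the $L^\infty(M(\R^2))$ hypothesis to keep all the above dominations uniform in $x$.
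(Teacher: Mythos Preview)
Your proof is correct and follows essentially the same route as the paper's: dualize, apply Fubini to rewrite the pairing as $\int f(x)\,(\widehat{\overline{h}}*B_x)(x)\,\mmd x$ (up to reflection), dominate the convolution pointwise by the $k$-fold iterate of $M_\mu$, and finish with H\"older, the $L^{p'}$ maximal bound, and Hausdorff--Young. The only differences are cosmetic: you are more careful than the paper about the reflection $\widetilde{|\mu|}$ and the total-variation inequality $|B_x|\le|\mu|_{t_1(x)}*\cdots*|\mu|_{t_k(x)}$, points the paper glosses over.
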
 

\begin{proof} We write, for an arbitrary function $g \in L^1(\R^2) \cap L^2(\R^2),$ 
\[
\langle Tf,g \rangle = \int_{\R^2} \overline{g}(\xi) \int_{\R^2} \widehat{B_x}(\xi) e^{-2 \pi i x \cdot \xi} f(x) \,\mmd x \, \mmd \xi.
\]
By Fubini and Plancherel, this equals, in turn, 
\[
\int_{\R^2} f(x) \widehat{\overline{g}} * B_x(x) \mmd x. 
\]
By the definition of $B_x$, property \eqref{maxim} and the Hausdorff-Young inequality, we bound the absolute value of the integral above by 
\[ 
\int_{\R^2} |f(x)| |M_{\mu}^k (\widehat{\overline{g}})|(x) \, \mmd x \le \|f\|_{p} \||M_{\mu}^k (\widehat{\overline{g}})|\|_{p'} \le (C_{\mu})^k \|f\|_p \|\widehat{\overline{g}}\|_{p'} \le (C_{\mu})^k \|f\|_p\|g\|_{p}.
\]
This proves the asserted bound for $T.$
\end{proof}

Notice that the function $B_x$ in \eqref{fourier} satisfies the hypotheses of Lemma \ref{mainlemma}. Notice also that $p'/2 >2$. After applying the Lemma above we are left with 
\[
\| (M_{\mu,g,t(\cdot)}^{*} h)^2\|_{p'/2}^{1/2} \lesssim \|H\|_{L^{(p'/2)'}(B_2(0))}^{1/2}.
\]
To conclude the proof, we revert from $H$ back to a product to estimate the right-hand-side for $1\le (p'/2)' =: \eta <2:$ 

\begin{align}\label{estimimp}
\int_{B_2(0)} |H(x)|^{\eta} \, \mmd x= & \int_{\Delta} |h(z(t))h(z(s))|^{\eta} \cdot (4\pi^2|\sin(2\pi(s-t))|)^{1-\eta} \, \mmd t \mmd s  \cr
& \le C_p\||f|^{\eta}\|^{2}_{\frac{2}{3-\eta}} = C_p \|f\|_{\frac{2\eta}{3-\eta}}^{2\eta} = C_p \|f\|_{(p'/3)'}.\cr 
\end{align}

Here, the last inequality follows from the Hardy--Littlewood--Sobolev inequality for fractional integrals. Indeed, we can bound 

$$ {4\pi^2|\sin(2\pi(s-t))|}^{1-\eta} \lesssim \sum_{j=-2}^{2} |t-s-j|^{1-\eta},$$ 
and then notice that each summand on the right hand side leads to a translated fractional integral. The result follows for the range $1 \le p < \frac{4}{3}, p' \ge 3q$ by interpolating this bound
with the $L^1(\R^2) \to L^{\infty}(\mathbb{S}^1)$ bound, which follows in turn from the Riemann-Lebesgue Lemma and finiteness of the measure $\mu. \,\,\square$ 

\subsection{Proof of Theorem \ref{rtheo1}} In the same spirit as above, proving Theorem \ref{rtheo1} is equivalent to proving bounds for 

\[
M_{r,g,t(\cdot)} f(x) := \int_{\R^2} \widehat{f}(x-y) g_x(x-y) \chi_{t(x)}(y)\mmd y,
\]
where we will take, in the aftermath, 
\[
g_x(z) = \frac{\overline{\widehat{f}(z)} |\widehat{f}(z)|^{r-2}}{|B_{t(x)}(0)|^{1/r - 1} \cdot \|\widehat{f}\|_{L^r(B_{t(x)}(x))}^{r-1}}.
\]
With the above choice, the integral defining $M_{r,g,t(\cdot)}$ equals $\left(\int_{B_{t(x)}(0)} |\widehat{f}(x-y)|^r \mmd y\right)^{1/r}.$ We denote a $L^1-$normalized dilation of characteristic
function of the unit ball as $\chi_{a}(x) := (1/a^2) \cdot \chi(x/a).$ We then write the adjoint as 
\[
M_{r,g,t(\cdot)}^{*} h(\xi) = \int_{\mathbb{S}^1} h(\omega) e^{-2 \pi i \omega \cdot \xi} \widehat{\mathcal{A}_{\omega}}(\xi) \mmd \sigma_{\mathbb{S}^1}(\omega),
\]
with $\mathcal{A}_x(y) = g_x(x-y) \chi_{t(x)}(y).$ As before, we calculate $(M_{r,g,t(\cdot)}^{*})^2$ and change variables. It suffices to bound 
\begin{equation}\label{reqq}
(M_{r,g,t(\cdot)h}^{*})^2(\xi) = 2 \int_{B_2(0)} H(x) e^{- 2 \pi i x \cdot \xi} \widehat{\mathcal{B}_x}(\xi) \,\mmd x =: T_r H(\xi), 
\end{equation}
where, again, 
\[
H(x) := \frac{h(z(s))h(z(t))}{|\det(z'(s),z'(t))|} = \frac{h(z(s))h(z(t))}{4\pi^2|\sin(2\pi(s-t))|},
\]
and 
\begin{equation}\label{convdef}
\widehat{\mathcal{B}_x}(\xi) := \widehat{\mathcal{A}_{z(t)}}(\xi) \widehat{\mathcal{A}_{z(s)}}(\xi).
\end{equation}
Of course, $z(s) + z(t) = x.$ The next Lemma is the main tool for bounding \eqref{reqq}, in order to employ the previous techniques:

\begin{lemma}\label{mainlemma2} Let $u \in \mathcal{S}(\R^2).$ Suppose that we are given a measurable function $\mathcal{A} : \mathbb{R}^2 \to L^{r'}(\R^2)$ so that $\sup_{x \in \mathbb{R}^2} |B_x|^{1/r}\|\mathcal{A}_x\|_{L^{r'}} < + \infty$ and
$\text{support} (\mathcal{A}_x) \subset B_x$ for some ball $B_x$ centered at the origin. If we define $\mathcal{B}_x$ as in 
equation \eqref{convdef}, then it holds that
\[
u * \mathcal{B}_{x}(\theta) \le C \cdot M_r(M_r u)(\theta), \forall \theta \in \R^2,
\]
where $C$ is \emph{independent} of $x \in B_2(0).$ 
\end{lemma}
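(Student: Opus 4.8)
The plan is to use the factorization $\mathcal{B}_x = \mathcal{A}_{z(t)} * \mathcal{A}_{z(s)}$ dictated by \eqref{convdef} (with the present normalization of the Fourier transform) together with one elementary remark, applied twice: if $w \ge 0$ and $\mathcal{A}_a$ is supported in the origin-centred ball $B_a = B(0,\rho_a)$, then H\"older's inequality and a translation give
\[
w * \mathcal{A}_a(\theta) = \int_{B(0,\rho_a)} w(\theta-y)\,\mathcal{A}_a(y)\,\mmd y \le \|w\|_{L^r(B(\theta,\rho_a))}\,\|\mathcal{A}_a\|_{L^{r'}} \le |B_a|^{1/r}\,\|\mathcal{A}_a\|_{L^{r'}}\,M_r w(\theta),
\]
where the last step rewrites $\|w\|_{L^r(B(\theta,\rho_a))} = |B_a|^{1/r}\big(\dashint_{B(\theta,\rho_a)}w^r\big)^{1/r}$ and dominates the average by $M(w^r)(\theta)$. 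Setting $C_{\mathcal{A}} := \sup_{x\in\R^2}|B_x|^{1/r}\|\mathcal{A}_x\|_{L^{r'}} < +\infty$, this reads $w * \mathcal{A}_a(\theta) \le C_{\mathcal{A}}\,M_r w(\theta)$; note that $C_{\mathcal{A}}$ does not depend on $a$, which is exactly what will force the final constant to be independent of $x$ (both $z(t)$ and $z(s)$ lie on $\mathbb{S}^1 \subset \R^2$, so they are covered by the supremum defining $C_\mathcal{A}$).

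First I would reduce to nonnegative data: since $|u * \mathcal{B}_x| \le |u| * |\mathcal{A}_{z(t)}| * |\mathcal{A}_{z(s)}|$, and $M_r$ as well as the hypotheses on $\mathcal{A}$ involve only absolute values, it suffices to treat $u,\mathcal{A}_{z(t)},\mathcal{A}_{z(s)} \ge 0$. Writing $u * \mathcal{B}_x = (u * \mathcal{A}_{z(t)}) * \mathcal{A}_{z(s)}$ by associativity of convolution, I apply the remark with $w = u * \mathcal{A}_{z(t)}$ and $a = z(s)$ to obtain $u * \mathcal{B}_x(\theta) \le C_{\mathcal{A}}\,M_r\!\big(u * \mathcal{A}_{z(t)}\big)(\theta)$, and then the remark again with $w = u$ and $a = z(t)$ to get the pointwise bound $u * \mathcal{A}_{z(t)} \le C_{\mathcal{A}}\,M_r u$ on all of $\R^2$. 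Plugging this into the previous line and using that $M_r$ is monotone and positively homogeneous of degree one, so that $M_r(C_{\mathcal{A}}\,M_r u) = C_{\mathcal{A}}\,M_r(M_r u)$, I arrive at $u * \mathcal{B}_x(\theta) \le C_{\mathcal{A}}^2\,M_r(M_r u)(\theta)$, which is the claim with $C = C_{\mathcal{A}}^2$.

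I do not expect a serious obstacle here: the entire content is the observation that convolution against a kernel supported in a ball \emph{centred at the origin} produces precisely an $L^r$-average over a translate of that ball, i.e.\ a value of $M_r$. The only points requiring mild care are the passage to absolute values (so that the stated inequality, and the monotonicity step, are meaningful when $u$ and the $\mathcal{A}_x$ are complex) and the verification that the intermediate function $u * \mathcal{A}_{z(t)}$ is regular enough for $M_r$ of it, and for its further convolution with $\mathcal{A}_{z(s)}$, to be finite; this is automatic since $u \in \mathcal{S}(\R^2)$ and $\mathcal{A}_{z(t)} \in L^{r'}(\R^2)$ has compact support.
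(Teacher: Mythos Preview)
Your proof is correct and follows essentially the same approach as the paper: factor $\mathcal{B}_x$ as a twofold convolution, then use H\"older's inequality together with the support and $L^{r'}$ bound on each $\mathcal{A}_a$ to dominate a single convolution by $C_{\mathcal{A}}\,M_r$, and iterate. Your write-up is in fact slightly more careful than the paper's, in that you make the reduction to nonnegative data explicit and track the constant as $C_{\mathcal{A}}^2$.
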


\begin{proof} We denote first $\pi_1(x), \pi_2(x) \in \mathbb{S}^1$ the points such that $\pi_1(x) + \pi_2(x) = x.$ The above convolution is 
\[
u * \mathcal{A}_{\pi_1(x)} * \mathcal{A}_{\pi_2(x)} (\theta).
\]
It suffices to prove that $u * \mathcal{A}_{\pi_1(x)} (\eta) \le C \cdot M_ru(\eta),$ as the same argument holds for the convolution with $\mathcal{A}_{\pi_2(x)}.$ 
We write 
\begin{align*}  & u * \mathcal{A}_{\pi_1(x)}(\eta)  = \int_{B_x} u(\eta - s) \mathcal{A}_{\pi_1(x)}(s) \mmd s \le (\sup_{z \in \R^2} \|\mathcal{A}_z\|_{L^{r'}}) \|u(\eta - \cdot)\|_{L^r(B_x)} \cr 
 & \lesssim |B_x|^{-1/r}  \|u(\eta - \cdot)\|_{L^r(B_x)} \le M_ru(\eta), \cr
\end{align*} 
where we have used H\"older's inequality and the properties of $\mathcal{A}.$ 
\end{proof}
With Lemma \ref{mainlemma2}, we are set to employ the techniques of the proof of Lemma \ref{mainlemma}. In fact, we let $G \in L^1(\R^2) \cap L^2(\R^2)$, and take $\mathcal{B}_x$ as defined in equation \eqref{convdef} with $\mathcal{A}_x(y) = g_x(x-y) \chi_{t(x)}(y).$ By a direct computation -- 
due to the dualization nature of our choice -- to check that this $\mathcal{A}$ satisfies the hypotheses of Lemma \ref{mainlemma2}. Therefore, we estimate the pairing:
\begin{align*}
 & \langle T_r H, G \rangle  = \int_{\R^2} \overline{G}(\xi) \left( \int_{B_2(0)} H(x) e^{- 2 \pi i x \cdot \xi} \widehat{\mathcal{B}_x}(\xi) \,\mmd x \right)  \, \mmd \xi \cr
 & = \int_{B_2(0)} H(x) \cdot \widehat{\overline{G}} * \mathcal{B}_x (x) \, \mmd x  \le \int_{B_2(0)} |H(x)| \cdot M_r (M_r \widehat{\overline{G}})(x) \mmd x \cr 
 & \le \|H\|_{L^p(B_2(0))} \|M_r(M_r \widehat{\overline{G}})\|_{p'} \le (C_r)^2 \|\widehat{\overline{G}}\|_{p'} \|H\|_{L^p(B_2(0))} \le \tilde{C}_{r,p} \|G\|_p  \|H\|_{L^p(B_2(0))}.
\end{align*}
We have, similarly as before, used Fubini and Plancherel Theorems together with Lemma \ref{mainlemma2} in the second line, and H\"older's inequality in combination with boundedness of $M_r$ in $L^{p'}$ (as $p' > 2 \ge r$) and the 
Hausdorff--Young inequality. \\ 

We conclude, by density, that $\|T_r H\|_{p'} \le \tilde{C}_{r,p} \|H\|_p, \, 1 \le p <2.$  Now one resumes from the calculation in \eqref{estimimp}, and our previous 
considerations allow us to finish, once one notices that the $L^1(\R^2) \to L^{\infty} (\mathbb{S}^1)$ boundedness in this case is also a direct consequence of the Riemann-Lebesgue lemma. $\,\,\square$  

\section{Proof of Theorems \ref{meastheo2} and \ref{rtheo2}}\label{threedim}  

\subsection{Proof of Theorem \ref{meastheo2}} The strategy here is a modification of the scheme of proof in \cite{Vitturi}. There, one uses an integral representation for the convolution of Fourier transforms. 
Here, as we are working with measures and not functions, such a representation only becomes available to some measures through delta calculus. We bypass this difficulty by
an argument similar to the one in the proofs of Theorems \ref{meastheo1} and \ref{rtheo1}. \\ 

Explicitly, we start by linearizing our operator through
\[
\mathcal{M}_{\mu,g,t(\cdot)} f(x) = \int_{\R^3} f(\xi) e^{2\pi i x \cdot \xi} \widehat{S_x}(\xi) \, \mmd \sigma (x),
\]
where $\mmd S_x (y) = g(x-y)\, \mmd \mu_{t(x)}(y), \|g\|_{\infty} \le 1.$ Again, we will take $g(z) = \frac{\overline{\widehat{f}(z)}}{|\widehat{f}(z)|}$ afterwards. The desired inequality translates into proving that 
\[
\|\mathcal{M}_{\mu,g,t(\cdot)} f\|_{L^4(\R^3)} \lesssim \|f\|_{L^2(\mathbb{S}^2)}. 
\]
We write the $L^4-$norm above as $\|(\mathcal{M}_{\mu,g,t(\cdot)} f)^2\|_2^{1/2},$ and evaluate the $L^2-$norm by duality: for any $h \in L^2(\R^3), \,\|h\|_2 \le 1,$ 
we have 
\begin{align}\label{eqqq}
 & \langle (\mathcal{M}_{\mu,g,t(\cdot)} f)^2 , h \rangle = \cr 
 & = \int_{\R^3} \left(\int_{(\mathbb{S}^2 )^2} f(x_1) f(x_2) e^{-2\pi i (x_1 + x_2) \cdot \xi} \widehat{S_{x_1}}(\xi) \widehat{S_{x_2}}(\xi) \, \mmd \sigma (x_1) \, \mmd \sigma (x_2)\right) h(\xi) \, \mmd \xi \cr 
 & = \int_{\mathbb{S}^2 \times \mathbb{S}^2} f(x_1) f(x_2) \left(\int_{\R^3} h(\xi) e^{-2\pi i (x_1 + x_2) \cdot \xi} \widehat{S_{x_1}}(\xi) \widehat{S_{x_2}}(\xi) \, \mmd \xi \right) \, \mmd \sigma(x_1) \mmd \sigma(x_2),\cr  
\end{align}
where we used Fubini's theorem to exchange integrals. Another application of Fubini's theorem in the innermost integral gives us that 
\begin{align*} 
 & \int_{\R^3 \times \R^3} g(x_1-y_1) g(x_2 - y_2) \widehat{h}((x_1 + x_2) - (y_1 + y_2))\,  \mmd \mu_{t(x_1)}(y_1) \, \mmd \mu_{t(x_2)}(y_2) = \cr
 & = \int_{\R^3} h(\xi) e^{-2 \pi i (x_1 + x_2) \cdot \xi} \widehat{S_{x_1}}(\xi) \widehat{S_{x_2}}(\xi) \, \mmd \xi.
\end{align*}
It is relatively simple to bound this integral: the integrand is pointwise bounded by 
\begin{align*}
 \int_{\R^3 \times \R^3} |\widehat{h}((x_1 + x_2) - (y_1 + y_2))| \,  \mmd \mu_{t(x_1)}(y_1) \, \mmd \mu_{t(x_2)}(y_2) & \le M_{\mu}(M_{\mu})(\widehat{h})(x_2 + x_1), \cr  
\end{align*}
where we used the definition of our maximal function associated to $\mu$. Thus, the integral we wish to estimate is bounded by 
\[
\int_{\mathbb{S}^2 \times \mathbb{S}^2} |f(x_1)| |f(x_2)| M_{\mu}(M_{\mu})(\widehat{h})(x_2 + x_1) \, \mmd \sigma(x_1) \, \mmd \sigma(x_2).
\]
By the Tomas-Stein theorem in dimension 3, as stated in \cite[Equation~2.3]{Vitturi}, the quantity above is at most a constant times 
\[
\|f\|_{L^2(\mathbb{S}^2)}^2 \|(M_{\mu})^2(\widehat{h})\|_{L^2(\R^3)} \le (C_{\mu})^2 \|f\|_{L^2(\mathbb{S}^2)}^2.
\]
Along with the previous considerations, it is exactly what we wanted to prove. $\square$

\subsection{Proof of Theorem \ref{rtheo2}} The general idea here is similar to the proofs above, so we move somewhat faster through it. In fact, we consider the maximal operator $M_2$ first. Like before, we define the linearization of this operator as 
\[
M_{2,g,t(\cdot)} f(x) := \int_{\R^3} \widehat{f}(x-y) g_x(x-y) \chi_{t(x)}(y)\mmd y,
\]
where, in the end, $\tilde{g}_x$ is to be taken as 
\[
\tilde{g}_x(z) = \frac{\overline{\widehat{f}(z)}}{|B_{t(x)}(0)|^{-1/2} \cdot \|\widehat{f}\|_{L^2(B_{t(x)}(x))}}.
\]
Like in the cases before, we fix $\tilde{g}_x$ with certain properties and then substitute the above to get our results. The formal adjoint of this operator is given by 
\[
M_{2,g,t(\cdot)}^{*} h(\xi) = \int_{\mathbb{S}^2} h(\omega) e^{-2 \pi i \omega \cdot \xi} \widehat{\mathcal{S}_{\omega}}(\xi) \mmd \sigma_{\mathbb{S}^2}(\omega),
\]
with $\mathcal{S}_x(y) = \tilde{g}_x(x-y) \chi_{t(x)}(y).$ This leads us to estimate, as before, the inner product $\langle (M_{2,g,t(\cdot)}^{*} h)^2, F \rangle.$ The calculation 
is entirely analogous to the one in \eqref{eqqq}, and we are led to estimate the function 
\[
\int_{\R^3} F(\xi) e^{-2\pi i (\omega_1 + \omega_2) \cdot \xi} \widehat{\mathcal{S}_{\omega_1}}(\xi) \widehat{\mathcal{S}_{\omega_2}}(\xi) \, \mmd \xi.  
\]
An application of Fubini's theorem, along with the calculations from the proofs of Theorems \ref{rtheo1} and \ref{meastheo2} yield \emph{pointwise} bounds for this integral by the iterated 
maximal function $M_2(M_2(\widehat{F}))(\omega_1 + \omega_2).$ This summarizes as 
\begin{equation}\label{maxx2}
|\langle (M_{2,g,t(\cdot)}^{*} h)^2, F \rangle| \le \int_{\mathbb{S}^2 \times \mathbb{S}^2} |h(\omega_1) h(\omega_2)| M_2(M_2(\widehat{F}))(\omega_1 + \omega_2) \, \mmd \sigma (\omega_1) \, \mmd \sigma (\omega_2). 
\end{equation}
In order to finish, we need to apply the following Lemma:
\begin{lemma}\label{tomasstein} Let $2 \le p \le \infty.$ There is a constant $C = C(p)$ such that, for all $v \in L^2(\mathbb{S}^2)$ and $W \in L^p(\R^3),$ it holds that
\[
\left| \int_{\mathbb{S}^2 \times \mathbb{S}^2} v(\omega_1) v(\omega_2) W(\omega_1 + \omega_2) \, \mmd \sigma(\omega_1) \, \mmd \sigma (\omega_2) \right| \le C \|v\|_{L^2(\mathbb{S}^2)}^2 \|W\|_{L^p(\R^3)}. 
\]
\end{lemma}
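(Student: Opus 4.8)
The plan is to recognise the expression as a pairing between $W$ and the convolution measure $\mu_v := (v\,\mmd\sigma)*(v\,\mmd\sigma)$ on $\R^3$, and then to interpolate two endpoint bounds for the linear functional $W \mapsto \int_{\R^3} W\,\mmd\mu_v$. Indeed, under the addition map $(\omega_1,\omega_2)\mapsto\omega_1+\omega_2$ the left-hand side of the claimed inequality is exactly $\int_{\R^3} W(x)\,\mmd\mu_v(x)$, where $\mu_v$ is a finite (complex) Borel measure with total variation $\|\mu_v\| \le \big(\int_{\mathbb{S}^2}|v|\,\mmd\sigma\big)^2$. So it suffices to prove that $\mu_v$ induces a bounded functional on both $L^\infty(\R^3)$ and $L^2(\R^3)$, with operator norm $\lesssim\|v\|_{L^2(\mathbb{S}^2)}^2$ in each case, and then to pass to $2\le p\le\infty$ by complex interpolation.

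First I would dispatch the case $p=\infty$: since $\mu_v$ is finite, Cauchy--Schwarz on $\mathbb{S}^2$ gives $\big|\int_{\R^3} W\,\mmd\mu_v\big| \le \|W\|_{L^\infty}\big(\int_{\mathbb{S}^2}|v|\,\mmd\sigma\big)^2 \le \sigma(\mathbb{S}^2)\,\|W\|_{L^\infty}\|v\|_{L^2(\mathbb{S}^2)}^2$. For $p=2$ I would use the convolution theorem, $\widehat{\mu_v} = \big(\widehat{v\,\mmd\sigma}\big)^2$, together with Plancherel's theorem, so that $\|\mu_v\|_{L^2(\R^3)} = \big\|\widehat{v\,\mmd\sigma}\big\|_{L^4(\R^3)}^2$; invoking the Stein--Tomas extension estimate in $\R^3$ at the endpoint exponent $q=4$ (the value $\tfrac{2(d+1)}{d-1}$ for $d=3$), which is precisely the inequality recorded as \cite[Equation~2.3]{Vitturi} and already used in the proof of Theorem~\ref{meastheo2}, yields $\|\mu_v\|_{L^2(\R^3)} \lesssim \|v\|_{L^2(\mathbb{S}^2)}^2$, and hence $\big|\int_{\R^3} W\,\mmd\mu_v\big| \le \|W\|_{L^2}\|\mu_v\|_{L^2} \lesssim \|W\|_{L^2}\|v\|_{L^2(\mathbb{S}^2)}^2$. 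In particular these two endpoint estimates show that $\mu_v$ is absolutely continuous with a density $g_v \in L^1(\R^3)\cap L^2(\R^3)$ satisfying $\|g_v\|_{L^1}, \|g_v\|_{L^2} \lesssim \|v\|_{L^2(\mathbb{S}^2)}^2$, so that $g_v \in L^{p'}(\R^3)$ with $\|g_v\|_{L^{p'}} \lesssim \|v\|_{L^2(\mathbb{S}^2)}^2$ for every $1\le p'\le 2$ by log-convexity of $L^p$ norms.

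The conclusion then follows directly from Hölder's inequality, $\big|\int_{\R^3} W g_v\big| \le \|W\|_{L^p}\|g_v\|_{L^{p'}}$, for the full range $2\le p\le\infty$; equivalently, one interpolates the two endpoint functional bounds by Riesz--Thorin. I do not expect a genuine obstacle here: the $p=\infty$ estimate is elementary and the $p=2$ estimate is the Stein--Tomas inequality already in hand, so the only point deserving a line of care is the identification of $\mu_v$ with an $L^1\cap L^2$ density legitimising the interpolation (equivalently the Hölder step), and this is immediate from the two endpoint bounds themselves since a finite measure whose Fourier transform lies in $L^2$ is an $L^1\cap L^2$ function.
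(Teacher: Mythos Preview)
Your proof is correct and follows essentially the same approach as the paper: both arguments establish the trivial $p=\infty$ endpoint via Cauchy--Schwarz on $\mathbb{S}^2$, the $p=2$ endpoint via the Stein--Tomas extension bound $\|\widehat{v\,\mmd\sigma}\|_{L^4(\R^3)}\lesssim\|v\|_{L^2(\mathbb{S}^2)}$ applied to $(v\,\mmd\sigma)*(v\,\mmd\sigma)$, and then interpolate. The only cosmetic difference is that the paper packages the interpolation through the linear operator $T_{v_1}W(\omega_1)=\int_{\mathbb{S}^2}v_1(\omega_2)W(\omega_1+\omega_2)\,\mmd\sigma(\omega_2)$ (obtaining a bilinear estimate in $v_1,v_2$ before specialising to $v_1=v_2$), whereas you interpolate directly on the density $g_v\in L^1\cap L^2$ of the convolution measure and apply H\"older.
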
 

\begin{proof} We define the operator
\[
T_{v_1}W(\omega_1) = \int_{\mathbb{S}^2} v_1(\omega_2) W(\omega_1 + \omega_2) \, \mmd \sigma (\omega_2)
\]
and note it satisfies the two following estimates:
\begin{itemize} 
\item For $p = \infty,$ the estimate $\|T_{v_1}W\|_{L^2(\mathbb{S}^2)} \lesssim \|v_1\|_{L^2(\mathbb{S}^2)} \|W\|_{\infty}$ follows by duality and triangle and H\"older's inequality. 
\item For $p=2,$ the estimate $\|T_{v_1}W\|_{L^2(\mathbb{S}^2)} \lesssim \|v_1\|_{L^2(\mathbb{S}^2)}\|W\|_{2}$ follows from the Tomas-Stein restriction theorem (see, e.g., \cite{Tomas, Foschi}), as stated in \cite{Vitturi}. In fact, for any 
two $v_1,v_2,$ we have
\[
\| (v_1 \, \mmd \sigma) * (v_2 \, \mmd \sigma) \|_2 = \| \widehat{ (v_1 \, \mmd \sigma)} \widehat{ (v_1 \, \mmd \sigma)} \|_2 \le \| \widehat{ (v_1 \, \mmd \sigma)}\|_4 \| \widehat{ (v_2 \, \mmd \sigma)}\|_4 \lesssim \|v_1\|_{L^2(\mathbb{S}^2)} \|v_2\|_{L^2(\mathbb{S}^2)}.
\]
The asserted inequality follows then by duality. 
\end{itemize}
The considerations above show that $T_{v_1}$ satisfies $L^{\infty}(\R^3) \to L^2(\mathbb{S}^2)$ and $L^2(\R^3) \to L^2(\mathbb{S}^2)$ estimates. By interpolation, it must also satisfy 
$L^p(\R^3) \to L^2(\mathbb{S}^2)$ estimates, with norm at most $\lesssim \|v_1\|_{L^2(\mathbb{S}^2)}$. By duality, this assertion is equivalent to 
\[
\left| \int_{\mathbb{S}^2 \times \mathbb{S}^2} v_1(\omega_1) v_2(\omega_2) W(\omega_1 + \omega_2) \, \mmd \sigma(\omega_1) \, \mmd \sigma (\omega_2) \right| \le C \|v_1\|_{L^2(\mathbb{S}^2)} \|v_2\|_{L^2(\mathbb{S}^2)} \|W\|_{L^p(\R^3)}. 
\]
By setting $v_1 = v_2$ one obtains the Lemma.
\end{proof} 
To finish the proof, we apply Lemma \ref{tomasstein} in \eqref{maxx2} with $\eta>2.$ Using that $M_2$ is bounded in $L^{\eta}$ and the Hausdorff--Young inequality gives 
\[
|\langle (M_{2,g,t(\cdot)}^{*} h)^2, F \rangle| \lesssim \|h\|^2_{L^2(\mathbb{S}^2)} \|\widehat{F}\|_{\eta} \le  \|h\|^2_{L^2(\mathbb{S}^2)} \|F\|_{\eta'}.
\]
It is straightforward to check that this last inequality is equivalent to $M_{2,g,t(\cdot)}^{*} h$ being bounded from $L^2$ to $L^{2\eta}.$ As $\eta>2$ was arbitrary, we finish this part of the proof. \\

In order to deal with $1 \le r <2,$ we use the pointwise domination $M_rf \le M_2f, 1 \le r \le 2.$ Thus the only missing point in the proof above is the endpoint $(\frac{4}{3},2).$ A combination of the proofs of Theorems
\ref{rtheo1} and \ref{meastheo2} gives us estimates in the endpoint case, in the same spirit as above. This time, the application of Lemma \ref{tomasstein} might be circumvented, as 
$M_r$ is bounded in $L^2.$ We skip the details. $\,\,\square$ 

\section{Comments, generalizations and remarks}\label{commsec}

\subsection{Maximal operators of convolution-type and multiplier theorems}\label{multp} Theorems \ref{meastheo1} and \ref{rtheo1} deal with maximal functions related to a measure $\mmd \mu.$ There, the key assumption 
is that these maximal functions must be bounded ``near" $L^2.$ As mentioned before, V. Kova\v{c}'s result \cite{Kovac1} has a seemingly different assumption on the measure. For his purposes, 
it is important that the measure is finite -- implied by the fact that the measure is complex -- and that the gradient of its Fourier transform satisfies a decay of the type 
\[
|\nabla \widehat{\mu}(\xi)| \le C (1+ |\xi|)^{-1-\eta}, \, \text{ for some } \eta >0.
\]
The next proposition shows that Kova\v{c}'s hypotheses actually \emph{imply} ours. We mention that this result is far from new, with s similar version appearing in \cite{SteinSogge}. For the convenience of the reader, we quickly review the results from \cite{RubioDeFrancia}: 

\begin{prop}\label{multipl} Let $T^*f(x) = \sup_{t>0} |\mathcal{F}^{-1} (m(t\cdot) \widehat{f})|.$ Suppose that 
\[
|m(\xi)| \lesssim (1+|\xi|)^{-a}, \, |\nabla m(\xi)| \lesssim (1+|\xi|)^{-b}, 
\]
with $a+b >1.$ Then $T^* : L^2(\R^n) \to L^2(\R^n)$ boundedly.
\end{prop}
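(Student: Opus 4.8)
The plan is to run the classical Littlewood--Paley $g$-function argument for maximal multiplier operators, in the spirit of \cite{RubioDeFrancia}, organised so that the $L^\infty$-decay and the gradient decay of $m$ interact and only the condition $a+b>1$ is used. First a reduction: if $a=0$ then necessarily $b>1$, and a gradient bound $|\nabla m(\xi)|\lesssim(1+|\xi|)^{-b}$ with $b>1$ forces $m$ to converge, as $|\xi|\to\infty$, to a limit $m_\infty$ \emph{independent of the direction}, with $|m(\xi)-m_\infty|\lesssim(1+|\xi|)^{1-b}$. Since $T^*f\le|m_\infty|\,|f|+\widetilde T^*f$, where $\widetilde T^*$ is the maximal operator attached to $m-m_\infty$, and the latter symbol now decays with exponent $b-1>0$ while keeping the same gradient bound (so $(b-1)+b>1$), we may assume from now on that $a>0$.

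Next I would fix a radial $\Psi\in C_c^\infty(\R^n)$ equal to $1$ on $\{|\xi|\le1\}$ and supported in $\{|\xi|\le2\}$, set $\psi:=\Psi-\Psi(2\cdot)$, and split
\[
m=m(0)\Psi+(m-m(0))\Psi+\sum_{k\ge1}m\,\psi(2^{-k}\cdot)=:m(0)\Psi+m^{(0)}+\sum_{k\ge1}m^{(k)}.
\]
By subadditivity of $\sup_{t>0}$ and Minkowski's inequality it is enough to bound the maximal operator of each piece. The piece $m(0)\Psi$ gives $\le|m(0)|\sup_{t>0}|f*(\check\Psi)_t|\lesssim Mf$, which is $L^2$-bounded, $\check\Psi$ being Schwartz and $|m(0)|\lesssim1$. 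For the other pieces one uses the standard mechanism (carried out first for $f\in\mathcal S(\R^n)$ and extended by density): if $v(x,t)=\mathcal F^{-1}(\mathfrak m(t\cdot)\widehat f)(x)$ and $t\mapsto\mathfrak m(t\xi)$ vanishes as $t\to0$ for each fixed $\xi$, the fundamental theorem of calculus in $t$ followed by Cauchy--Schwarz gives
\[
\sup_{t>0}|v(x,t)|^2\le 2\Bigl(\int_0^\infty|v(x,s)|^2\,\frac{\mmd s}{s}\Bigr)^{1/2}\Bigl(\int_0^\infty|s\,\partial_s v(x,s)|^2\,\frac{\mmd s}{s}\Bigr)^{1/2},
\]
and after integrating in $x$, another Cauchy--Schwarz and Plancherel bound $\|\sup_t|v(\cdot,t)|\|_2$ by $\|f\|_2$ times the geometric mean of $\sup_\xi\int_0^\infty|\mathfrak m(s\xi)|^2\tfrac{\mmd s}{s}$ and $\sup_\xi\int_0^\infty|s\xi\cdot(\nabla\mathfrak m)(s\xi)|^2\tfrac{\mmd s}{s}$.

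Applying this to $\mathfrak m=m^{(0)}$ (which is $C^1$, compactly supported and vanishes linearly at $0$) makes both $\xi$-uniform integrals finite with no further hypothesis, since each integrand is $\lesssim\min(|s\xi|^2,\mathbf 1_{|s\xi|\le2})$. Applying it to $\mathfrak m=m^{(k)}$, $k\ge1$: now $\mathfrak m$ is supported where $|\xi|\sim2^k$, for fixed $\xi\neq0$ the function $t\mapsto\mathfrak m(t\xi)$ is supported in an interval of fixed logarithmic length, and on that set $|m|\lesssim2^{-ak}$ while $|\nabla(m\,\psi(2^{-k}\cdot))|\lesssim2^{-bk}+2^{-k-ak}$. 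Thus the first integral is $\lesssim2^{-2ak}$, the second $\lesssim(2^{(1-b)k}+2^{-ak})^2$, and the maximal operator of $m^{(k)}$ has $L^2\to L^2$ norm $\lesssim\bigl(2^{(1-a-b)k}+2^{-2ak}\bigr)^{1/2}$. Since $a>0$ and $a+b>1$, summing over $k\ge1$ converges, which finishes the argument.

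The step I expect to be the main obstacle is exactly this per-scale estimate for $m^{(k)}$ together with its summation: one must verify that the $L^\infty$-decay (exponent $a$) and the gradient decay (exponent $b$) combine into a geometric series, which is precisely where $a+b>1$ is invoked — and it is here that one sees that $b>1$ alone would be needed if one declined to exploit the decay of $|m|$. The rest (the reduction to a decaying symbol and the low-frequency term $m^{(0)}$) is routine, but it has to be isolated, because the $g$-function identity degenerates on the portion of $m$ that survives as $t\to0$, namely $m(0)\Psi$, and on the portion that survives as $|\xi|\to\infty$, namely $m_\infty$.
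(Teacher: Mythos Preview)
Your proposal is correct and follows essentially the same route as the paper's proof: a dyadic Littlewood--Paley decomposition of $m$ followed by the Rubio de Francia $g$-function mechanism $\sup_t|v|^2\le 2\int_0^\infty|v|\,|s\partial_s v|\,\tfrac{\mmd s}{s}$, yielding per-piece $L^2$ bounds that sum geometrically under $a+b>1$. You are in fact more careful than the paper on two points it glosses over: the preliminary reduction to $a>0$ (subtracting off $m_\infty$), and the extra $2^{-ak}$ term arising from differentiating the cutoff $\psi(2^{-k}\cdot)$, which is precisely why the positivity of $a$ is needed for summability.
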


\begin{proof} Letting $\psi_0: \R^n \to \R$ be a (radial) smooth function supported in the annulus $\{y \colon 1/2 \le |y| \le 2\}$ so that 
\[
\sum_{j \in \Z} \psi_0(2^j \xi) = 1, \, \forall \xi \ne 0,
\]
we define $m_j(\xi) := m(\xi) \psi_0(2^j \xi).$ By letting $T^*_j$ denote the maximal multiplier operator associated to each of these multipliers, we have 
\[
T^* f \le T^*_0 f + \sum_{j \le 0} T^*_j f. 
\]
Here, we let $\sum_{j > 0} m_j(\xi) = \phi_0(\xi)$ and define the operator $T^*_0$ to be the maximal multiplier operator associated to $\phi_0.$ As $\phi_0$ is a smooth function with compact support, this operator is bounded pointwise by a maximal function. 
We then move on to estimate each factor $T^*_jf$ individually: we bound the supremum by
\begin{align*}
\sup_{t>0} |\mathcal{F}^{-1} (m_j(t\cdot)\widehat{f})(x)|^2 &  \le \left( \int_0^{\infty}| T_{j,t}f(x) \cdot \tilde{T}_{j,t}f(x)| \, \frac{\mmd t}{t} \right), \cr
\end{align*}
where $\widehat{T_{j,t}f}(\xi) = m_j(t \xi) \widehat{f}(\xi),\,\, \widehat{\tilde{T}_{j,t}f}(\xi) = \tilde{m}_j(t\xi) \widehat{f}(\xi),$ with $\tilde{m}_j(\xi) = \xi \cdot \nabla m_j (\xi).$ We estimate then 

\begin{align*}
\|T_j^*f\|_2^2 & = \left( \int_0^{\infty} \int_{\R^n} |m_j(t\xi)\widehat{f}(\xi)|^2 \, \mmd \xi \, \frac{\mmd t}{t} \right)^{1/2} \times \left( \int_0^{\infty} \int_{\R^n} |\tilde{m}_j(t\xi)\widehat{f}(\xi)|^2 \, \mmd \xi \, \frac{\mmd t}{t} \right). \cr 
\end{align*}
The integrals above exist only for $2^j t |\xi| \in [1/2,2].$ Therefore, using the decay properties of $m, \tilde{m}$, we obtain
\[
\|T_j^*f\|_2^2 \lesssim 2^{ja} 2^{j(b-1)} \|f\|_2^2 = 2^{j(a+b-1)} \|f\|_2^2.
\]
As we supposed that $a+b > 1,$ the series above is summable in $j<0,$ which completes the proof. 
\end{proof}

Theorem \ref{meastheo1} not only recovers a version of the two-dimensional results from Kova\v{c}, but also allows us to extend them, as mentioned before, to a larger class of maximal functions. For instance, 
Bourgain's circular maximal function fulfills the conditions to Theorem \ref{meastheo1}, whereas the gradient 
$$ |\nabla \widehat{\sigma_{\mathbb{S}^1}}(\xi)|  \sim |\xi|^{-1/2}$$
for non-trivial sets of $|\xi| \to \infty$ in two dimensions, so that Kova\v{c}'s result does not apply. Also, the spherical maximal function in dimension three satisfies that 
$$ |\nabla \widehat{\sigma_{\mathbb{S}^2}}(\eta)| \sim |\eta|^{-1}$$ 
on a non-trivial set of $|\eta| \to \infty$, but, as $|\widehat{\sigma_{\mathbb{S}^2}}(\eta)| = O(|\eta|^{-1}),$ it is still possible to use Proposition \ref{multipl} to conclude the $L^2-$boundedness of this operator, which is all we need to conclude. 

\subsection{The spherical maximal functions and previous maximal restriction results} In \cite{Ramos1}, this author proves a full range 2-dimensional maximal restriction estimate for the \emph{strong maximal function}. Namely,
the main theorem there is that 
\[
\|M_{\mathcal{S}}(\widehat{f})\|_{L^q(\mathbb{S}^1)} \lesssim_p \|f\|_{L^p(\R^2)},
\]
with $M_{\mathcal{S}}g(x) = \sup_{\substack{R \text{ axis parallel}, \\ \text{centered at} x}} \dashint_R |g|.$ One might ask is whether Theorem \ref{meastheo1} implies the result above through a pointwise domination, as 
the spherical maximal function dominates the usual Hardy--Littlewood maximal function. Our next result shows that the answer is \emph{no} in all dimensions larger than 1.

\begin{prop} Let $d \ge 2.$ Then there exists $f \in \cap_{p \ge 1} L^p(\R^d)$ such that 
\[
\text{\emph{ess sup}}_{x \in \R^d} \frac{M_{\mathcal{S}}f(x)}{M_{\mathbb{S}^{d-1}}f(x)} = + \infty.
\]
\end{prop}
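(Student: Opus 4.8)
The plan is to build $f$ as a widely separated, disjoint superposition of characteristic functions of thin rectangular \emph{planks}, chosen so that along each plank the strong maximal function is of unit size while the spherical maximal function decays like a power of the thickness. Fix $d\ge 2$ and, for $0<\epsilon<1$, let
\[
E_\epsilon:=\Big[\tfrac12,\tfrac32\Big]\times\Big[-\tfrac\epsilon2,\tfrac\epsilon2\Big]^{d-1}\subset\R^d,
\]
a plank of thickness $\epsilon$ pointing in the $e_1$-direction. Two elementary facts about $E_\epsilon$ drive the argument. First, for every $x$ with $|x|\le\epsilon/10$ one has $M_{\mathcal S}\chi_{E_\epsilon}(x)\ge c_d>0$: the axis-parallel box centered at $x$ with $e_1$-sidelength $3+2|x_1|$ and $e_j$-sidelength $\epsilon+2|x_j|$ for $j\ge2$ contains $E_\epsilon$, so its average of $\chi_{E_\epsilon}$ equals $|E_\epsilon|/|R_x|\gtrsim_d 1$. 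Second, for every such $x$ and every $t>0$,
\[
\sigma\big(\{\omega\in\mathbb S^{d-1}\colon x-t\omega\in E_\epsilon\}\big)\lesssim_d \epsilon^{d-1},\qquad\text{hence}\qquad M_{\mathbb S^{d-1}}\chi_{E_\epsilon}(x)\lesssim_d\epsilon^{d-1},
\]
because the $d-1$ constraints $x_j-t\omega_j\in[-\epsilon/2,\epsilon/2]$ force $|\omega_j|\lesssim\epsilon$ as soon as the set is nonempty, confining $\omega$ to a box of $\sigma$-measure $\lesssim_d\epsilon^{d-1}$ around one of the poles $\pm e_1$, while the remaining constraint $x_1-t\omega_1\in[1/2,3/2]$ only restricts $t$ to a bounded range.

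Now pick $\epsilon_k=2^{-k}$ and vectors $v_k$ with $|v_k|\to\infty$ so rapidly that the translates $v_k+E_{\epsilon_k}$ are pairwise disjoint and $|v_k-v_j|\ge\epsilon_k^{-2}$ for $j\ne k$; set $f:=\sum_k\chi_{v_k+E_{\epsilon_k}}=\chi_S$ with $S:=\bigsqcup_k(v_k+E_{\epsilon_k})$. Since $|S|=\sum_k|E_{\epsilon_k}|=2\sum_k 2^{-k(d-1)}<\infty$ — this is exactly where $d\ge2$ is used — we have $f\in\bigcap_{p\ge1}L^p(\R^d)$. On the ball $B(v_k,\epsilon_k/10)$ the first fact gives $M_{\mathcal S}f\ge M_{\mathcal S}\chi_{v_k+E_{\epsilon_k}}\ge c_d$. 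For the denominator, split $M_{\mathbb S^{d-1}}f\le M_{\mathbb S^{d-1}}\chi_{v_k+E_{\epsilon_k}}+\sum_{j\ne k}M_{\mathbb S^{d-1}}\chi_{v_j+E_{\epsilon_j}}$: the first term is $\lesssim_d\epsilon_k^{d-1}$ by the second fact, while each far term is at most the $\sigma$-measure of a spherical cap of angular radius $\lesssim|v_k-v_j|^{-1}$, hence $\lesssim|v_k-v_j|^{-(d-1)}\le\epsilon_k^{2(d-1)}$; summing over $j$ leaves $M_{\mathbb S^{d-1}}f\lesssim_d\epsilon_k^{d-1}$ on $B(v_k,\epsilon_k/10)$ for all large $k$. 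Therefore the ratio $M_{\mathcal S}f/M_{\mathbb S^{d-1}}f\gtrsim_d\epsilon_k^{-(d-1)}\to\infty$ on a sequence of balls of positive measure, which yields that its essential supremum is $+\infty$.

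The only genuinely delicate point is the second fact: one must verify that the bound $\lesssim_d\epsilon^{d-1}$ is \emph{uniform in $t$} and that no near-tangency of the dilated spheres $t\,\mathbb S^{d-1}$ with the plank produces a larger contribution. The key observation is that the spheres are never nearly tangent to the \emph{long} faces $\{\omega_j=\pm\epsilon/2\}$ of $E_\epsilon$ — only possibly to the two short faces $\{\omega_1=1/2\},\{\omega_1=3/2\}$, and that happens only for $t$ in an interval of length $O(\epsilon^2)$ about $1/2$ or $3/2$ — so the thin directions always pin down $\omega_2,\dots,\omega_d$ directly, and the set $\{\omega\colon x-t\omega\in E_\epsilon\}$ stays inside the small polar box. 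Everything else — the combinatorics of the gluing, membership in $\bigcap_p L^p$, and the lower bound for $M_{\mathcal S}$ — is routine. $\square$
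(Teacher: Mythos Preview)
Your argument is essentially correct and follows a genuinely different route from the paper. One small imprecision: when you control the far contributions you write that each term is $\lesssim |v_k-v_j|^{-(d-1)}\le \epsilon_k^{2(d-1)}$ and then ``sum over $j$''; but summing the constant $\epsilon_k^{2(d-1)}$ over infinitely many $j$ diverges. What actually makes the sum finite is the \emph{symmetric} reading of your separation condition, which gives $|v_k-v_j|\ge \max(\epsilon_k^{-2},\epsilon_j^{-2})=4^{\max(j,k)}$ and hence $\sum_{j\ne k}|v_k-v_j|^{-(d-1)}\lesssim k\,4^{-k(d-1)}=o(\epsilon_k^{d-1})$. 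With that clarification the proof goes through.

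As for the comparison: the paper splits into cases. For $d\ge 3$ it uses the single function $f=\chi_{Q(0,1)}$ and the elementary asymptotics $M_{\mathcal S}f(x)\gtrsim |x|^{-1}$, $M_{\mathbb S^{d-1}}f(x)\lesssim |x|^{-(d-1)}$ as $|x|\to\infty$, so the ratio blows up with no gluing at all. Only in the borderline case $d=2$ does the paper resort to a superposition of increasingly thin rectangles placed at rapidly growing distances, and there it evaluates the two maximal functions on strips of width $\sim 10^n$ around the $n$-th center, splitting the spherical maximal function into ``large radius'' and ``small radius'' pieces. Your construction, by contrast, is uniform in $d\ge 2$: the thin planks do all the work, and you test on tiny balls of radius $\epsilon_k/10$ near each center where $M_{\mathcal S}f\gtrsim 1$ directly. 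This buys a single clean proof for all dimensions, at the cost of carrying the gluing combinatorics even when $d\ge 3$, where the paper's one-line cube example is much shorter. For $d=2$ the two constructions are close in spirit, though your local analysis (pinning $|\omega_j|\lesssim \epsilon$ via $t\gtrsim 1$) is somewhat more transparent than the paper's large/small radius dichotomy.
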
 

\begin{proof} Let first $d \ge 3.$ In these cases, the counterexample is much simpler. In fact, we take $f = \chi_{Q(0,1)}$, the characteristic of the unit cube. It is a simple calculation to 
verify that $M_{\mathcal{S}}f(x) \gtrsim \frac{1}{|x|}$ whenever $|x| \gg 1.$ Also, one obtains in a fairly straightforward manner that $M_{\mathbb{S}^{d-1}}f(x) \lesssim \frac{1}{|x|^{d-1}}, \, |x| \gg 1.$ As $d-1 > 1,$ $f$ is a sought-after
counterexample. \\

In dimension $d=2$ matters are subtler. Let $g_n(x_1,x_2) = \chi_{[0,1]}(x_1) \chi_{[0,1/n^{20}]}(x_2).$ We take a sequence $(y_n,r_n) \in \R^2 \times \R_{+}$ such that
\begin{itemize}
\item $r_{n+1} = 10^n r_n, \, r_1 = 1;$
\item $y_{n+1} = (r_1 + 2(r_2 + \cdots + r_n) + r_{n+1},0).$
\end{itemize}
We then set up the function $f(x) = \sum_{n =1}^{\infty} g_n(x-y_n).$ This function is clearly in any $L^p$ space. We estimate the strong and spherical maximal functions for $x$ in a strip near $y_n.$ \\ 

Effectively, let $x \in S_n := y_n + [-10^n,10^n] \times [0, 1/n^{20}]$. Similarly as in the high dimensional case, $M_{\mathcal{S}}f(x) \gtrsim \frac{1}{|x-y_n|}.$ Now we split the spherical maximal 
function into two parts as 
\begin{equation}\label{splitt}
M_{\mathbb{S}^1}f = \text{max} \{M_{\mathbb{S}^1, \ge r_{n}}f, M_{\mathbb{S}^1, < r_{n}}f\}.
\end{equation}
Here, $M_{\mathbb{S}^1, \ge t}g$ stands for the maximal function obtained by only taking radii larger than $t,$ and define analogously $M_{\mathbb{S}^1, < t}f$. By the properties of the radii $r_n$ and the way we defined $y_n,$ 
\[
M_{\mathbb{S}^1, \ge r_{n}}f(x) \lesssim \frac{1}{r_n}.
\]
Also, for the local part we obtain
\[
M_{\mathbb{S}^1, < r_{n}}f(x) \lesssim \frac{1}{n^{10} |x-y_n|}.
\]
Substituting these inequalities in the quotient, using \eqref{splitt}, we get 
\[
\frac{M_{\mathcal{S}}f(x)}{M_{\mathbb{S}^1}f(x)} \gtrsim \text{min}\left\{ n^{10}, \frac{r_n}{|x-y_n|}\right\}.
\]
Notice that $r_n = 10^{\frac{n(n-1)}{2}}$ and that $|x-y_n| \lesssim 10^n$ in $S_n.$ We have found a set of measure $\gtrsim 10^{n/2}$ where the desired quotient is at least $n^{10}.$ But these sets are mutually disjoint, 
which readily implies that the $L^{\infty}$ norm of the quotient is not finite.
\end{proof} 

\subsection{Theorems \ref{rtheo1} and \ref{rtheo2} and a Knapp-like counterexample}\label{knapp} In this Section, we adapt the classical Knapp counterexample to obtain constraints on $s$, in order for versions of Theorems \ref{rtheo1} and \ref{rtheo2} to hold for a family of strong maximal functions:

\begin{prop} Let 
\[
M_{\mathcal{S},s}\, g = \left(\sup_{\substack{R \text{ axis parallel}, \\ \text{centered at } x}} \dashint_R |g|^s \right)^{1/s}
\]
denote the $s-$strong maximal function, in either two or three dimensions. Suppose that 
\[
\| M_{\mathcal{S},s} \, \widehat{g} \|_{L^q(\mathbb{S}^1)} \lesssim \|g\|_{L^p(\R^2)},
\]
whenever $1 \le p < \frac{4}{3}$ and $3q \le p'.$ Then $s \le 4.$ \\ 

Analogously, suppose that 
\[
\| M_{\mathcal{S},s} \, \widehat{g} \|_{L^2(\mathbb{S}^2)} \lesssim \|g\|_{L^p(\R^3)},
\]
for all $1 \le p \le 4/3.$ Then $s \le 2.$ 
\end{prop}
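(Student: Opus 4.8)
The plan is to run the standard Knapp example, inserted into the hypothesized maximal restriction inequality, and track how the $s$-strong maximal function amplifies the natural lower bound. First I would take $g$ to be (essentially) the indicator of a thin rectangular slab dual to a cap on the sphere: in $\R^2$, a box $R_\delta$ of dimensions roughly $\delta^{-1} \times \delta^{-2}$ (in three dimensions, $\delta^{-1}\times\delta^{-1}\times\delta^{-2}$), so that $\widehat{g}$ is, up to acceptable errors, of size $|R_\delta|$ on a dual box $R_\delta^*$ of dimensions $\delta \times \delta^2$ (resp.\ $\delta\times\delta\times\delta^2$) adapted to a $\delta$-cap on $\mathbb{S}^1$ (resp.\ $\mathbb{S}^2$). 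The right-hand side is then $\|g\|_{L^p(\R^2)} \sim |R_\delta|^{1/p}$. The key point is that the $L^q(\mathbb{S}^1)$-norm of $M_{\mathcal{S},s}\widehat g$ is \emph{not} just the trivial lower bound $|R_\delta| \cdot \delta^{1/q}$ coming from the cap where $|\widehat g|\sim |R_\delta|$: the $s$-average maximal function over axis-parallel rectangles, evaluated at a point $x$ on the sphere at distance $\sim t$ from the cap, can pick up a long thin axis-parallel rectangle that captures the full mass $|R_\delta|\cdot|R_\delta^*|$ of $\widehat g$ on $R_\delta^*$ while averaging an $s$-th power over a box of volume $\sim t\cdot \mathrm{width}$; optimizing, this produces a pointwise lower bound for $M_{\mathcal{S},s}\widehat g(x)$ that decays like a power of $t$ governed by $s$, and hence a nontrivial contribution to the $L^q(\mathbb{S}^1)$-integral from an arc of length $\sim 1$ around the cap.

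Concretely, the second step is to compute, for $x\in\mathbb{S}^1$ at distance $t$ from the cap center (with $\delta \le t \lesssim 1$), the lower bound
\[
M_{\mathcal{S},s}\widehat g(x) \;\gtrsim\; |R_\delta| \left( \frac{|R_\delta^*|}{t \cdot |R_\delta^*|^{1/2}} \right)^{?}\!,
\]
i.e.\ one takes the rectangle centered at $x$ whose short side is comparable to the short side of $R_\delta^*$ and whose long side is $\sim t$, so that the $s$-average is $\big(|R_\delta|^s |R_\delta^*| / (t\cdot(\text{short side}))\big)^{1/s}$. Plugging in the numerology $|R_\delta|\sim \delta^{-3}$, $|R_\delta^*|\sim\delta^{3}$ (two dimensions; $\delta^{-4}$ and $\delta^{4}$ in three dimensions), and then integrating $M_{\mathcal{S},s}\widehat g(x)^q$ over $t\in[\delta,1]$ against $\mmd\sigma_{\mathbb{S}^1}$, one gets a lower bound of the form $c\,\delta^{-\alpha(s,q)}$ for the left-hand side, while the right-hand side is $C\,\delta^{-3/p}$. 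The inequality then forces $\alpha(s,q)\le 3/p$ for all admissible $(p,q)$ with $1\le p<4/3$, $3q\le p'$. Finally I would specialize to the extreme admissible choice — $p\uparrow 4/3$, $q = p'/3 \to 1$ — which is exactly the regime that makes the maximal amplification most visible, and read off that the surviving constraint is precisely $s\le 4$ in the two-dimensional case; the three-dimensional computation is identical with $\mathbb{S}^1\rightsquigarrow\mathbb{S}^2$, $\delta^{-3}\rightsquigarrow\delta^{-4}$, $q=2$ fixed, $p=4/3$, and yields $s\le 2$.

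The main obstacle, and the place where care is needed, is the pointwise lower bound for $M_{\mathcal{S},s}\widehat g$ away from the cap: one must verify that the optimal axis-parallel rectangle really does capture essentially all of the mass of $\widehat g$ on $R_\delta^*$ rather than a fraction, and that the error terms in approximating $\widehat{\chi_{R_\delta}}$ by $|R_\delta|\chi_{R_\delta^*}$ (the genuine Fourier transform has slowly-decaying tails, not compact support) do not destroy the computation — this is handled in the usual way by working with a smoothed bump adapted to $R_\delta$ so that $\widehat g$ genuinely concentrates on $R_\delta^*$ with rapidly decaying tails, and by only claiming the lower bound on a fixed arc. A secondary point is bookkeeping: one should double-check that the exponent balance indeed collapses to the clean bounds $s\le 4$ and $s\le 2$ rather than something $p$- or $q$-dependent, which is why taking the endpoint of the $(p,q)$-range is essential. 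I expect everything else — the change of variables, the measure of the relevant arc, Hausdorff–Young-type estimates — to be routine.
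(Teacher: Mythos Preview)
Your two--dimensional argument is essentially on the right track, but it is more elaborate than necessary and the rectangle you describe is not quite the correct one. At a point $x\in\mathbb S^1$ at angular distance $t$ from the cap, an axis-parallel rectangle centred at $x$ with ``short side comparable to the short side of $R_\delta^*$'' (i.e.\ width $\sim\delta^2$) does \emph{not} reach $R_\delta^*$ once $t\gg\delta$: in the vertical direction $x$ sits at height $1-t^2/2$, so any centred box touching $[1-\delta^2,1]$ must have vertical side $\gtrsim t^2$, not $\delta^2$. The minimal centred box containing $R_\delta^*$ therefore has volume $\sim t\cdot t^2=t^3$, and the pointwise lower bound becomes $|R_\delta|\,(\delta^3/t^3)^{1/s}$. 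If you now carry out the $L^q$-integration in $t$ you will find that the dominant contribution comes from $t\sim 1$, which is exactly the paper's shortcut: the paper simply fixes an arc bounded away from the cap, uses a single $O(1)\times O(1)$ rectangle, obtains $M_{\mathcal S,s}\widehat f_t\gtrsim t^{3/s}$ uniformly there, and reads off $s\le p'$, hence $s\le 4$ as $p\uparrow 4/3$. Your $t$-dependent refinement gives nothing extra.

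The three--dimensional part, however, has a genuine gap. Running the same cap example with $R_\delta^*$ of dimensions $\delta\times\delta\times\delta^2$ and $|R_\delta|\sim\delta^{-4}$, the minimal centred axis-parallel box at a generic point of $\mathbb S^2$ at distance $\sim 1$ from the pole has volume $\sim 1$, so the $s$-average is $\sim\delta^{-4}\cdot\delta^{4/s}$; comparing with $\|g\|_{L^{4/3}}\sim\delta^{-3}$ yields only $4-4/s\le 3$, i.e.\ $s\le 4$, not $s\le 2$. Tracking the $t$-decay does not help: the spherical measure $t\,\mathrm dt\,\mathrm d\phi$ suppresses the small-$t$ contribution, and restricting to thin neighbourhoods of a great circle (where one box side can be kept $\sim\delta$) has too little measure to improve the bound beyond $s\le 4$. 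The paper instead uses a \emph{tube} (``long-Knapp'') example $\widehat{F_t}=\chi_{B^2(0,t)}(\eta_1,\eta_2)\chi_{(-1,1)}(\eta_3)$: now $\|F_t\|_{L^{4/3}}\sim t^{1/2}$, while at any point of $\mathbb S^2$ away from the poles one can centre a box of dimensions $O(1)\times t\times O(1)$ (volume $\sim t$) that captures a portion of the tube of measure $\sim t^2$, giving $M_{\mathcal S,s}\widehat F_t\gtrsim t^{1/s}$ and hence $1/s\ge 1/2$. The asymmetry of the tube---one $O(1)$ direction along which the strong maximal function can stretch cheaply---is precisely what the isotropic cap lacks, and is the missing idea in your three-dimensional argument.
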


Before we move on to the proof, we remark that a combination of the proofs of Theorems \ref{rtheo1}, \ref{rtheo2} and the ideas in \cite{Ramos1} attains that 
\[
\|M_{\mathcal{S},s} \widehat{f}\|_{L^q(\mathbb{S}^1)} \lesssim \|f\|_{L^p(\R^2)}, \,\text{whenever } 1 \le p < \frac{4}{3}, p'\ge 3q \text{ and } s \le 2,
\]
and 
\[
\|M_{\mathcal{S},s}\widehat{g}\|_{L^2(\mathbb{S}^2)} \lesssim \|g\|_{L^p(\R^3)}, \, \text{whenever } 1 \le p \le \frac{4}{3} \text{ and } s < 2.
\]
We spare the details, for their proofs are essentially the same as the ones presented. 

\begin{proof} We begin with the two-dimensional part. Let $\widehat{f_t}(\xi_1,\xi_2) = \chi_{(-t,t)}(\xi_1) \chi_{(1-t^2,1)}(\xi_2).$ We call this the \emph{box-Knapp example}. It is easy to compute that 

\[
\| f_t \|_{L^p(\R^2)} = C \cdot t^{3 - 3/p}, \, \forall t >0.
\]
On the other hand, we estimate the maximal function $M_{\mathcal{S},s} (\widehat{f_t})$ from bellow as follows. Fix a small angle $\theta_0 >0.$ Then, for $\theta \in (\pi/4, \pi/2 - \theta_0),$
there is a constant $c(\theta_0)$ so that $\cos(\theta), 1- \sin(\theta) \ge c(\theta_0).$ We estimate:
\begin{align*}
M_{\mathcal{S},s}(\widehat{f_t})(e^{i \theta}) & \ge \left( \dashint_{(-t,\cos(\theta)) \times (\sin(\theta),1)}  \chi_{(-t,t) \times (1-t^2,t)} \right)^{1/s} \gtrsim \frac{t^{3/s}}{\cos(\theta)^{1/s} (1-\sin(\theta))^{1/s}}  \gtrsim t^{3/s}. \cr
\end{align*}
This is the estimate we need, for then
\[
\|M_{\mathcal{S},s}(\widehat{f_t})\|_{L^q(\mathbb{S}^1)} \gtrsim \left(\int_{\pi/4}^{\pi/2-\theta_0} t^{3q/s} \mmd \theta\right)^{1/q} \gtrsim_{\theta_0} t^{3/s}. 
\]
Putting together yields that 
\[
\forall\, 0 < t \ll 1, \, t^{3/s} \lesssim t^{3-3/p} \iff \frac{1}{s} + \frac{1}{p} - 1 \ge 0.
\]
If $s = 4 + \varepsilon,$ then $1/p \ge \frac{3 + \varepsilon}{4+ \varepsilon} \iff p \le \frac{4+\varepsilon}{3 + \varepsilon} < \frac{4}{3},$ and the restriction estimates cannot hold in the full two-dimensional range. \\ 

For the three-dimensional part, we let $\widehat{F_t} (\eta_1,\eta_2,\eta_3) = \chi_{B^2(0,t)}(\eta_1,\eta_2) \chi_{(-1,1)}(\eta_3),$ and call this a \emph{long-Knapp example}. Again, a computation shows that
\[
\| F_t \|_{L^p(\R^3)} = \tilde{C} t^{2 - 2/p}, \, \forall t>0.
\]
In this case, we bound $M_{\mathcal{S},s} (\widehat{F_t})$ from below by the $s-$average over a rectangle of dimensions $t \times t \times 4$ centered at each point $x \in \mathbb{S}^2$. In a spherical region of positive $\mathcal{H}^2-$measure, we have 
\[
M_{\mathcal{S},s}(\widehat{F_t}) \gtrsim t^{1/s} \Rightarrow t^{1/s} \lesssim t^{2-2/p}, \, \forall \, t \text{ small} \iff \frac{1}{s} - 2 + \frac{2}{p} > 0.
\]
Again, if $s >2,$ then $p$ is forced to be strictly less than $4/3.$ 
\end{proof}

With the long-Knapp example, we prove the following:

\begin{prop}\label{sharpp} The only dimensions in which maximal restriction estimates for $M_{\mathcal{S}} := M_{\mathcal{S},1}$ can hold in the full range are $d=2,3.$ 
\end{prop}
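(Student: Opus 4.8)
The plan is to show that in every dimension $d \ge 4$ the full-range maximal restriction estimate for $M_{\mathcal{S}} = M_{\mathcal{S},1}$ fails, while recalling that Theorems \ref{meastheo1}, \ref{rtheo1} (for $d=2$) and Theorems \ref{meastheo2}, \ref{rtheo2} (for $d=3$) already provide the positive direction. For the negative direction the natural vehicle is the long-Knapp example from the previous proposition, rescaled to $\R^d$: set $\widehat{F_t}(\eta_1,\dots,\eta_d) = \chi_{B^{d-1}(0,t)}(\eta_1,\dots,\eta_{d-1})\,\chi_{(-1,1)}(\eta_d)$, a slab of dimensions $\sim t^{d-1}$ in the first $d-1$ coordinates and width $\sim 1$ in the last. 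A direct computation gives $\|F_t\|_{L^p(\R^d)} \sim t^{(d-1)(1 - 1/p)}$, since $F_t$ is an $L^1$-normalized-type product of a $(d-1)$-dimensional sinc-bump of width $\sim t^{-1}$ with a one-dimensional sinc of width $\sim 1$.

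Next I would bound $M_{\mathcal{S}}(\widehat{F_t})$ from below on a spherical region of positive $\mathcal{H}^{d-1}$-measure. At a point $x \in \mathbb{S}^{d-1}$ whose last coordinate is bounded away from $0$ and which is bounded away from the equator $\{\eta_d = 0\}$, one can place an axis-parallel box of dimensions $t \times \cdots \times t \times 4$ (with the long side along $e_d$) so that a portion of it of measure $\sim t^{d-1}$ lies inside $\{|\eta'| < t\} \times (-1,1)$; dividing by the box volume $\sim t^{d-1}$ yields $M_{\mathcal{S}}(\widehat{F_t})(x) \gtrsim 1$. Hence on a fixed-measure spherical cap $\|M_{\mathcal{S}}(\widehat{F_t})\|_{L^q(\mathbb{S}^{d-1})} \gtrsim 1$ uniformly in $t$. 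Feeding this into the putative estimate $\|M_{\mathcal{S}}\widehat{F_t}\|_{L^q} \lesssim \|F_t\|_{L^p}$ forces $1 \lesssim t^{(d-1)(1-1/p)}$ for all small $t>0$, which is impossible unless $1 - 1/p \le 0$, i.e. $p \le 1$. Since $p=1$ is the only survivor and the full range requires all $1 \le p < p_d$ for some $p_d > 1$ (the relevant Stein--Tomas / Knapp threshold exceeds $1$ in every dimension), the full-range estimate cannot hold for $d \ge 4$. Combining with the affirmative results for $d = 2, 3$ established earlier in the paper completes the proof.

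The main obstacle, such as it is, is purely bookkeeping: one must verify that the lower bound $M_{\mathcal{S}}(\widehat{F_t}) \gtrsim 1$ genuinely holds on a set of spherical measure bounded below independently of $t$, which requires checking that for a robust set of directions $x \in \mathbb{S}^{d-1}$ there is an axis-parallel box centered at $x$, of the stated aspect ratio, capturing a proportional chunk of the slab. This is exactly the computation already carried out in the three-dimensional part of the preceding proposition, and the $d$-dimensional case is identical after relabeling coordinates; in fact for $d \ge 4$ one could even use the simpler characteristic-of-the-cube example (as in the first case of the strong-vs-spherical comparison proposition) since the slab is only needed to keep $\|F_t\|_{L^p}$ small. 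I would therefore phrase the proof as: invoke Theorems \ref{meastheo1} and \ref{meastheo2} for $d=2,3$; for $d \ge 4$, apply the long-Knapp example exactly as above, noting the exponent computation $\|F_t\|_p \sim t^{(d-1)(1-1/p)}$ against $\|M_{\mathcal{S}}\widehat{F_t}\|_{L^q} \gtrsim 1$ and reading off the contradiction for $p > 1$.
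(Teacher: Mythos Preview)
There is a genuine error in your lower bound for $M_{\mathcal{S}}(\widehat{F_t})$. You claim that for $x\in\mathbb{S}^{d-1}$ bounded away from the poles one can center a box of dimensions $t\times\cdots\times t\times 4$ at $x$ and capture a portion of the slab $\{|\eta'|<t\}\times(-1,1)$ of measure $\sim t^{d-1}$. But if $|x'|\ge c>0$ is fixed and $t$ is small, the $\eta'$-projection of that box is $x'+[-t/2,t/2]^{d-1}$, which lies at distance $\ge c-O(t)$ from the origin and therefore misses $B^{d-1}(0,t)$ entirely. The average over your box is zero, not $\sim 1$. Indeed, your conclusion $1\lesssim t^{(d-1)(1-1/p)}$ would force $p\le 1$ in \emph{every} dimension, contradicting the positive results you yourself invoke for $d=2,3$; this already signals the argument proves too much.

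The paper's computation uses instead a box with one \emph{long} transverse side (of order $1$) so that it reaches from $x$ back to the slab, and short sides $\sim t$ in the remaining transverse directions. This yields only $M_{\mathcal{S},s}(\widehat{F_t})\gtrsim t^{1/s}$, hence for $s=1$ the constraint $t\lesssim t^{(d-1)(1-1/p)}$, i.e.\ $p'\ge d-1$. In terms of the Tomas--Stein endpoint $p'=\tfrac{2(d+1)}{d-1}$ this gives the threshold $s\le \tfrac{2(d+1)}{(d-1)^2}$, which drops below $1$ only for $d\ge 5$. For $d=4$ one has $\tfrac{2(d+1)}{(d-1)^2}=\tfrac{10}{9}>1$, so the long-Knapp example at Tomas--Stein does not suffice; the paper closes this case by invoking Guth's restriction estimates, which push the admissible $p'$ in $d=4$ down to any $p'>2.8$, and then $s\le 2.8/3<1$ rules out $M_{\mathcal{S}}$. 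Your argument is missing both the correct lower bound and this additional $d=4$ step.
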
 

\begin{proof} By an argument using long-Knapp example from above, in order for the full range of maximal restriction estimates of the kind 
\begin{equation}\label{sharprest}
\|M_{\mathcal{S},s} (\widehat{f}) \|_{L^q(\mathbb{S}^{d-1})} \lesssim \|f\|_{L^p(\R^d)}
\end{equation}
to hold in the same regime as the already known restriction estimates, we must have $s \le \frac{2(n+1)}{(n-1)^2}.$ This number is less than $1$ if $n \ge 5.$ Also, using the results from \cite{Guth2} (see also \cite{RogHickman} for further developments), 
we know that the restriction estimates from \ref{restriction1} in dimension 4 for the sphere hold as long as $p' > 2.8.$ Thus, in order for \ref{sharprest} to hold in the full range for $d=4,$ we need $s \le \frac{2.8}{3} < 1.$ 
In particular, this implies that $M_{\mathcal{S}}$ cannot be bounded in the full range, except for when $d=2$ or $d=3.$ 
\end{proof}

As proved in \cite{Ramos1}, these estimates do hold in the case of the two-dimensional problem. An interesting question is the validity of the same bounds in dimension 3. Nevertheless, an affirmative answer would trivially imply the 
three-dimensional restriction conjecture, which is still not completely settled. \\ 

Note that the long-Knapp example, if translated to 2 dimensions, provides us with the \emph{exact same} bounds as we have achieved. In fact, one achieves that, for $\widehat{\tilde{f_t}}(\xi_1,\xi_2) = \chi_{(-t,t)}(\xi_1) \chi_{[-1,1]}(\xi_2),$
\[
t^{1/s} \lesssim \|M_{\mathcal{S},s}(\widehat{\tilde{f}_t}) \|_{L^q(\mathbb{S}^1)} \lesssim \|\tilde{f}_t\|_{L^p(\R^2)} \lesssim t^{1-1/p} \iff p' \ge s \Rightarrow s \le 4.
\]
Thus, we get no improvement from changing the counterexample's nature. Furthermore, if we replace the strong maximal function by the Hardy--Littlewood maximal function in \emph{any} dimension, the long-Knapp and the box-Knapp examples deliver the same 
bounds for $s$: 
\[
\frac{1}{s} - \frac{1}{p'} \ge 0 \iff s \le p'.
\]
For the three-dimensional Tomas-Stein exponent case, we get the same $s \le 4$ bound as in the two dimensions. One inquires whether there is any fundamental difference between the strong and the Hardy--Littlewood maximal functions in this context. Our counterexamples
seem to hint at an intrinsic geometric distinction. \\ 

The three-dimensional Theorem \ref{rtheo2} is essentially sharp, in the sense that we have attained an almost exact characterization of when the full range maximal restriction estimates work. The only remaining case is the $s=2, p=4/3$ case. We suspect that the inequality should fail in that endpoint. At the moment, we
can neither prove nor disprove it.


\begin{thebibliography}{99}
\bibitem{Bourgain1}
\newblock J. Bourgain,
\newblock \emph{Averages  in  the  plane  over  convex  curves  and  maximal  operators,}
\newblock J. Analyse Math., {\bf 47} (1986), 69--85.


\bibitem{CarlSjoelin}
\newblock L. Carleson and P. Sj\"olin, 
\newblock \emph{Oscillatory integrals and a multiplier problem for the disc,}
\newblock Studia Math., {\bf 44} (1972), 287--299.


\bibitem{Foschi}
\newblock D. Foschi,
\newblock \emph{Global maximizers for the sphere adjoint Fourier restriction inequality,}
\newblock J. Func. Anal., {\bf 268} (2015), n. 3, 690--702.



\bibitem{Guth2} 
\newblock L. Guth, 
\newblock \emph{Restriction estimates using polynomial partitioning II,} 
\newblock preprint available at \href{https://arxiv.org/abs/1603.04250}{\texttt{arXiv:1603.04250}}. 

\bibitem{RogHickman}
\newblock J. Hickman and K. Rogers, 
\newblock \emph{Improved Fourier restriction estimates in higher dimensions,}
\newblock preprint available at \href{https://arxiv.org/abs/1807.10940}{\texttt{arXiv:1807.10940v2}}.

\bibitem{Kovac1}
\newblock V. Kova\v{c}, 
\newblock \emph{Fourier restriction implies maximal and variational Fourier restriction,}
\newblock preprint available at \href{https://arxiv.org/abs/1811.05462v3}{\texttt{arXiv:1811.05462v3}}.

\bibitem{KovacOliveira}
\newblock V. Kova\v{c} and D. Oliveira e Silva, 
\newblock \emph{Variational Fourier restriction estimates,}
\newblock preprint available at \href{https://arxiv.org/abs/1809.09611}{\texttt{arXiv:1809.09611}}.

\bibitem{MRW}
\newblock D. M\"uller, F. Ricci and J. Wright, 
\newblock \emph{A maximal restriction theorem and Lebesgue points of functions in $\mathcal{F}(L^p),$}
\newblock preprint available at \href{https://arxiv.org/pdf/1612.04880.pdf}{\texttt{arXiv:1612.04880}}. 

\bibitem{Ramos1}
\newblock J. P. Ramos, 
\newblock \emph{Maximal restriction estimates and the maximal function of the Fourier transform,}
\newblock submitted; preprint available at \href{https://arxiv.org/abs/1810.06971}{\texttt{arXiv:1810.06971}} 

\bibitem{RubioDeFrancia}
\newblock J. L. Rubio de Francia, 
\newblock \emph{Maximal functions and Fourier transforms,}
\newblock Duke Math. J., {\bf 53} (1986), n. 2, 395--404. 



\bibitem{SteinSogge}
\newblock E. Stein and C. Sogge, 
\newblock \emph{Averages of functions over hypersurfaces in $\R^n$,}
\newblock Invent. Math, {\bf 82} (1985), 543--556.


\bibitem{Tomas}
\newblock P. Tomas, 
\newblock \emph{A restriction theorem for the Fourier transform,}
\newblock Bull. Amer. Math. Soc., {\bf 81} (1975), n. 2, 477--478.

\bibitem{Vitturi}
\newblock M. Vitturi, 
\newblock \emph{A note on maximal Fourier Restriction for spheres in all dimensions,}
\newblock preprint available at \href{https://arxiv.org/abs/1703.09495}{\texttt{arXiv:1703.09495}}.


 

\end{thebibliography}
\end{document}